\newtheorem{thm}{Theorem}[section]
\newtheorem{pro}[thm]{Proposition}
\newtheorem{cor}[thm]{Corollary}
\newtheorem{lem}[thm]{Lemma}
\newcommand{\noin}{\noindent}
\newcommand{\dozspace}{\;\;\;\;\;\;\;\;\;\;\;\;}
\newcommand{\dozback}{\!\!\!\!\!\!\!\!\!\!\!\!}
\newcommand{\la}{\langle}
\newcommand{\ra}{\rangle}
\newcommand{\bbb}{{\mbox{}}}
\newcommand{\ppp}{{\vphantom{\gamma}}}
\newcommand{\BB}{{\mathbb{B}}}
\newcommand{\FF}{{\mathbb{F}}}
\newcommand{\KK}{{\mathbb{K}}}
\newcommand{\QQ}{{\mathbb{Q}}}
\newcommand{\ZZ}{{\mathbb{Z}}}
\newcommand{\Con}{\mbox{\rm{Con}}}
\newcommand{\Def}{\mbox{\rm{Def}}}
\newcommand{\Ind}{\mbox{\rm{Ind}}}
\newcommand{\Inf}{\mbox{\rm{Inf}}}
\newcommand{\Res}{\mbox{\rm{Res}}}
\newcommand{\ind}{\mbox{\rm{ind}}}
\newcommand{\lin}{\mbox{\rm{lin}}}
\newcommand{\tr}{\mbox{\rm{tr}}}
\newcommand{\oo}{\overline}
\renewcommand{\tt}{\widetilde}
\newcommand{\hh}{\widehat}
\newcommand{\cC}{{\cal C}}
\newcommand{\cE}{{\cal E}}
\newcommand{\cL}{{\cal L}}
\newcommand{\cM}{{\cal M}}
\newcommand{\cN}{{\cal N}}
\newcommand{\cP}{{\cal P}}
\newcommand{\cR}{{\cal R}}
\newcommand{\cS}{{\cal S}}
\begin{document}


\title{An inversion formula for the primitive \\
idempotents of the trivial source algebra}

\author{\large Laurence Barker \\ \mbox{} \\
\normalsize Department of Mathematics \\
\normalsize Bilkent University \\
\normalsize 06800 Bilkent, Ankara \\
\normalsize Turkey}

\maketitle

\small

\begin{abstract}
\noin Formulas for the primitive
idempotents of the trivial source algebra, in
characteristic zero, have been given by Boltje and
Bouc--Th\'{e}venaz. We shall give another formula
for those idempotents, expressing them as
linear combinations of the elements of a canonical
basis for the integral ring. The formula is an inversion
formula analogous to the Gluck--Yoshida formula
for the primitive idempotents of the Burnside algebra.
It involves all the irreducible characters of all the
normalizers of $p$-subgroups. As a corollary,
we shall show that the linearization map from the
monomial Burnside ring has a matrix whose entries
can be expressed in terms of the above Brauer
characters and some reduced Euler characteristics
of posets.

\smallskip
\noin 2010 {\it Mathematics Subject Classification.}
Primary: 20C20; Secondary: 19A22.

\smallskip
\noin {\it Keywords:} Trivial source ring, $p$-permutation
ring, table of marks, monimial Burnside ring.

\end{abstract}

\let\thefootnote\relax\footnote{This work was
supported by T\"{u}bitak Scientific and Technological
Research Funding Program 1001 under grant number
114F078.}

\section{Introduction}

The Burnside ring $B(G)$ of a finite group $G$ has
a basis consisting of the isomorphism classes of
transitive $G$-sets. Extending to coefficients in
$\QQ$, the isomorphism classes of transitive
$G$-sets still comprise a basis for $\QQ B(G)$.
The primitive idempotents of $\QQ B(G)$ comprise
another basis for $\QQ B(G)$. It is straightforward
to express each element of the former basis as a
linear combination of elements of the latter basis.
The matrix associated with those linear
combinations was studied by Burnside, who
called it the table of marks for $G$. Gluck
\cite{Glu81} and Yoshida \cite[Section 3]{Yos83}
inverted the table of marks, expressing each
primitive idempotent as a linear combination
of isomorphism classes of transitive $G$-sets.
Boltje \cite[Section 3]{Bol} gave similar inversion
formulas for the primitive idempotents of various
monomial Burnside rings and character rings.

Let $\FF$ be a field with prime characteritsic $p$
and let $\KK$ be a field of characteristic zero.
Throughout, we shall assume that $\KK$ and $\FF$
are sufficiently large in the sense that they own
enough $p'$-th roots of unity. Fixing an
isomorphism between a sufficiently large group
of $p'$-th roots of unity in $\KK$ and a group of
$p'$-th roots of unity in $\FF$, we can understand
that, for modules of all the group algebras over
$\FF$ that come into consideration, the Brauer
characters have values in $\KK$.

Recall, given a finite-dimensional $\FF G$-module
$M$, then every indecomposable direct summand
of $M$ has a trivial source if and only if $M$ has a
basis stabilized by a Sylow $p$-subgroup of $G$.
When those equivalent conditions hold, we call
$M$ a {\bf trivial source} $\FF G$-module (or
a {\bf $p$-permutation} $\FF G$-module). The
{\bf trivial source ring} $T_\FF(G)$ of $\FF G$,
(also called the {\bf $p$-permutation ring} of
$\FF G$), is defined to be the abelian group
generated by the isomorphism classes $[M]$
of trivial source $\FF G$-modules $M$, subject
to the relations $[M] + [M'] = [M \oplus M']$. We
make $T_\FF(G)$ become a ring with multiplication
coming from tensor product, $[M][M'] = [M
\otimes_\FF M']$. For an introduction to the theory
of trivial source modules and the trivial source
ring, see Section 2 or, for more detail,
Bouc--Th\'{e}venaz \cite[Section 2]{BT10}.

Let $P$ be a $p$-subgroup of $G$. Write
$\oo{N}_G(P) = N_G(P)/P$. Let $\phi$ be an
irreducible Brauer character of $\FF \oo{N}_G(P)$.
Write $E_\phi$ for the indecomposable projective
$\FF \oo{N}_G(P)$-module such that the simple
head $E_\phi / J(E_\phi)$ has Brauer character
$\phi$. We define an induced and inflated
$\FF G$-module
$$N_{P, \phi} = \bbb_G^\ppp \Ind
  \bbb_{N(P)}^\ppp \Inf
  \bbb_{\oo{N}(P)}(E_\phi) \; .$$
In Section 2 we shall show that, as $P$ and
$\phi$ vary, the isomorphism classes having
the form $[N_{P, \phi}]$ comprise a $\ZZ$-basis
for $T_\FF(G)$ and a $\KK$-basis for
$\KK T_\FF(G)$. We shall call this basis the
{\bf canonical basis} for $T_\FF(G)$. Also in
Section 2, we shall also give a citation
for the well-known fact that the primitive
idempotents of $\KK T_\FF(G)$ comprise a
$\KK$-basis for $\KK T_\FF(G)$.

Formulas for the primitive idempotents of
$\KK T_\FF(G)$ were given by Boltje
\cite[3.6]{Bol} and Bouc--Th\'{e}venaz
\cite[4.12]{BT10}. Our main result,
Theorem \ref{2.4}, asserts a different
formula which expresses each primitive
idempotent as a linear combination
of the elements of the canonical basis. Our
formula, involving irreducible Brauer characters
of $\FF \oo{N}_G(P)$, is an inversion
formula in the sense that it is analogous to
the inversion formulas of Gluck and Yoshida.
We shall prove it, in Section 3, by considering
a matrix analogous to the table of marks.

The monomial Burnside ring $B_\FF(G)$ has
a $\ZZ$-basis consisting of the isomorphism
classes of transitive $F^\times$-fibred $G$-sets,
where $F^\times = \FF - \{ 0 \}$. Theorem
\ref{4.1} gives a formula for the matrix
representing the linearization map
$\lin_G : B_\FF(G) \rightarrow T_\FF(G)$ with
respect to the above basis of $B_\FF(G)$ and
the canonical basis for $T_\FF(G)$. The formula
again involves irreducible Brauer characters of
$\FF \oo{N}_G(P)$.

\section{Statement of the idempotent formula}

After briefly reviewing some well-known material
on the primitive idempotents of the trivial source
algebra $\KK T_\FF(P)$, we shall state a formula
for those idempotents and deduce a corollary.

Let $P$ be a $p$-subgroup of $G$ and let $M$
be a trivial source $\FF G$-module. We write
$M^P$ to denote the subspace of $P$-fixed
elements of $M$. For $Q \leq P$, we define the
relative trace map to be the linear map
$$\tr_Q^P \: : \: M^Q \ni m \mapsto
  \sum_{uQ \subseteq P} {}^u m \in M^P \; .$$
We define the {\bf Brauer quotient} of $M$
to be the $\FF \oo{N}_G(P)$-module
$$M[P] = M^P /
  \sum_{Q < P} \tr_Q^P(M^Q) \; .$$
Since $M$ is a trivial source module, it has
a $P$-stable basis $\Omega$. Writing
$\Omega^P$ for the set of $P$-fixed elements
of $\Omega$, then $M[P]$ can be identified
with the $\FF$-span $\FF \Omega^P$ of
$\Omega^P$.

Let $\cC(G)$ be the set of pairs $(P, \phi)$ where
$P$ is a $p$-subgroup of $G$ and $\phi$ is an
irreducible Brauer character of $\FF \oo{N}_G(P)$.
We allow $G$ to act on $\cC(G)$ such that
${}^g (P, \phi) = ({}^g P, {}^g \phi)$ for $g \in G$.
For each $(P, \phi) \in \cC(G)$, we define
$M_{P, \phi}^G$ to be the indecomposable
$\FF G$-module, unique up to isomorphism, such
that $M_{P, \phi}^G$ has vertex $P$ and
$M_{P, \phi}^G$ is in Green correspondence with
the inflated $\FF N_G(P)$-module $\bbb_{N(P)}^\ppp
\Inf \bbb_{\oo{N}(P)}^\ppp (E_\phi)$. Plainly,
$M_{P, \phi}^G$ is a trivial source
$\FF G$-module and the isomorphism class
$[M_{P, \phi}^G]$ depends only on the $G$-orbit
of $(P, \phi)$. Using Bouc--Th\'{e}venaz
\cite[2.7]{BT10}, it is straightforward to show that
$M_{P, \phi}^G[P] \cong E_\phi$.

\begin{pro} \label{2.1}
{\rm (See \cite[2.9]{BT10}.)} As $(P, \phi)$ runs over
representatives of the $G$-orbits of $\cC(G)$, the
isomorphism classes $[M_{P, \phi}^G]$ comprise a
$\ZZ$-basis for $T_\FF(G)$.
\end{pro}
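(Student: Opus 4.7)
The plan is to combine Krull--Schmidt with Green correspondence and the Brauer quotient machinery. By Krull--Schmidt, $T_\FF(G)$ is a free $\ZZ$-module on the isomorphism classes of indecomposable trivial source $\FF G$-modules, so it suffices to establish a bijection between the $G$-orbits on $\cC(G)$ and those isomorphism classes.

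Every indecomposable trivial source $\FF G$-module has a vertex, which is a $p$-subgroup of $G$ well-defined up to $G$-conjugacy. First I would use Green correspondence to reduce to the case where $G$ is replaced by $N_G(P)$: for fixed $P$, the indecomposable trivial source $\FF G$-modules with vertex $P$ biject with the indecomposable trivial source $\FF N_G(P)$-modules with vertex $P$. This reduction is precisely what the definition of $M_{P, \phi}^G$ encodes, via the requirement that its Green correspondent be $\bbb_{N(P)}^\ppp \Inf \bbb_{\oo{N}(P)}^\ppp (E_\phi)$.

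Next, with $N = N_G(P)$ and $P$ normal in $N$, I would argue that an indecomposable trivial source $\FF N$-module with vertex $P$ has $P$ acting trivially, hence is the inflation of some indecomposable $\FF \oo{N}_G(P)$-module. The Brauer quotient at $P$ (the relation $M_{P, \phi}^G[P] \cong E_\phi$ stated in the excerpt) implements this correspondence, and the inflated module has trivial source with vertex exactly $P$ precisely when the underlying $\FF \oo{N}_G(P)$-module is an indecomposable projective. Indecomposable projectives of $\FF \oo{N}_G(P)$ are classified up to isomorphism by their simple heads, hence by the irreducible Brauer characters $\phi$.

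Combining these bijections gives, for each $p$-subgroup $P$, a parametrization of the isomorphism classes of indecomposable trivial source $\FF G$-modules with vertex $P$ by irreducible Brauer characters of $\FF \oo{N}_G(P)$, and running $P$ through conjugacy class representatives assembles this into a bijection from $G$-orbits on $\cC(G)$ onto the isomorphism classes of indecomposables. The main obstacle is the internal step identifying the trivial source $\FF N_G(P)$-modules of vertex $P$ with the indecomposable projectives of $\FF \oo{N}_G(P)$ via inflation and the Brauer quotient; this is the content of \cite[2.7]{BT10} cited in the excerpt, so the body of the proof is essentially an assembly of existing module-theoretic tools rather than a new computation.
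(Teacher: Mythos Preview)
Your argument is correct and is essentially the standard one: Krull--Schmidt reduces to classifying indecomposable trivial source modules, Green correspondence reduces to $N_G(P)$, and then inflation/Brauer quotient identifies the vertex-$P$ indecomposables there with indecomposable projectives of $\FF\oo{N}_G(P)$, parametrized by irreducible Brauer characters. Note, however, that the paper does not supply its own proof of this proposition at all: it is stated with the citation to \cite[2.9]{BT10} and left at that, so there is nothing to compare against beyond observing that your sketch is the argument one would find behind that reference.
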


Let $\cE(G)$ denote the set of pairs $(Q, [s])$
where $Q$ is a $p$-subgroup of $G$ and $[s]$
is the conjugacy class of a $p'$-element of
$\oo{N}_G(Q)$. We allow $G$ to act on $\cE(G)$
such that ${}^g(Q, [s]) = ({}^g Q, {}^g [s])$.
For each $(Q, [s]) \in \cE(G)$, we
define a linear map
$$\epsilon_{Q, s}^G \: : \: \KK T_\FF(G)
  \rightarrow \KK$$
such that, given a trivial source $\FF G$-module
$M$, then $\epsilon_{Q, s}^G[M]$ is the value,
at $s$, of the Brauer character of $M[Q]$. It is
easy to show that $\epsilon_{Q, s}^G$ depends
only on the $G$-orbit of $(Q, [s])$. Part of
\cite[2.18]{BT10} asserts that $\epsilon_{Q, s}^G$
is an algebra map.

\begin{pro} \label{2.2}
{\rm (See \cite[2.18, 2.19]{BT10}.)} For each
$G$-orbit of elements $(Q, [s]) \in \cE(G)$, there
exists a unique primitive idempotent $e_{Q, s}^G$
of $\KK T_\FF(G)$ such that $\epsilon_{Q, s}^G
(e_{Q, s}^G) = 1$. Furthermore,
$$\KK T_\FF(G) = \bigoplus_{(Q, [s]) \in_G \cE(G)}
  \KK \, e_{Q, s}^G$$
as a direct sum of algebras isomorphic to $\KK$,
the notation indicating that $(Q, [s])$ runs over
representatives of the $G$-orbits of $\cE(G)$.
\end{pro}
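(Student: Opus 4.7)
The plan is to show that $\KK T_\FF(G)$ is a finite-dimensional commutative semisimple $\KK$-algebra whose $\KK$-algebra homomorphisms to $\KK$ are precisely the species $\epsilon_{Q,s}^G$, indexed by the $G$-orbits of $\cE(G)$. Once that is in place, the existence and uniqueness of primitive idempotents $e_{Q,s}^G$ dual to the species, together with the asserted algebra decomposition, follow from the structure theorem for finite-dimensional commutative semisimple algebras over a field.

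Commutativity of $\KK T_\FF(G)$ is immediate from the symmetry of the tensor product. The core of the argument is to exhibit enough distinct species. I would pair the canonical basis $\{ [M_{P,\phi}^G] \}$ of Proposition \ref{2.1} with the species $\{ \epsilon_{Q,s}^G \}$ and examine the square matrix $V$ with entries $V_{(Q,[s]),(P,\phi)} = \epsilon_{Q,s}^G [M_{P,\phi}^G]$. Using the standard vanishing $M_{P,\phi}^G[Q] = 0$ whenever $Q$ is not $G$-subconjugate to the vertex $P$, together with the identification $M_{P,\phi}^G[P] \cong E_\phi$ already noted before Proposition \ref{2.1}, one obtains block triangularity of $V$ with respect to the partial order on $p$-subgroups up to $G$-conjugacy. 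The diagonal block at a given $P$ is the Brauer character table of $\oo{N}_G(P)$, which is invertible. Hence $V$ is invertible; in particular $|\cE(G)/G| = |\cC(G)/G| = \rank_\ZZ T_\FF(G)$, and the species are linearly independent functionals on $\KK T_\FF(G)$.

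Since each $\epsilon_{Q,s}^G$ is an algebra map (invoked from \cite[2.18]{BT10}) and their number matches $\dim_\KK \KK T_\FF(G)$, the product map $\KK T_\FF(G) \rightarrow \bigoplus_{(Q,[s]) \in_G \cE(G)} \KK$ sending $x \mapsto (\epsilon_{Q,s}^G(x))_{(Q,[s])}$ is a $\KK$-algebra isomorphism onto a product of copies of $\KK$. Pulling back the standard orthogonal primitive idempotents of the target yields elements $e_{Q,s}^G \in \KK T_\FF(G)$, uniquely characterised by $\epsilon_{Q',s'}^G(e_{Q,s}^G) = \delta_{(Q,[s]),(Q',[s'])}$; these are then the primitive idempotents of $\KK T_\FF(G)$, and the direct sum decomposition in the statement is immediate.

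The main obstacle is the triangularity step. It depends on the vanishing $M_{P,\phi}^G[Q] = 0$ for $Q$ not subconjugate to $P$, a standard consequence of $P$ being the vertex, but some bookkeeping is needed to identify the diagonal block at $P$ of $V$ with the genuine Brauer character table of $\oo{N}_G(P)$ evaluated at $p'$-classes; fortunately $E_\phi$ ranges over a full set of projective indecomposables as $\phi$ ranges over irreducible Brauer characters, so the diagonal block really is the full Brauer character table and nothing is lost.
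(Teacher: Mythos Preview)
The paper does not prove this proposition at all; it simply cites \cite[2.18, 2.19]{BT10}. So there is no in-paper argument to compare yours against.

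Your argument is correct and is essentially the standard one. One small terminological slip: the diagonal block of your matrix $V$ at a fixed $P$ has entries $\epsilon_{P,s}^G[M_{P,\phi}^G] = \hh{\phi}(s)$, the Brauer character of the projective indecomposable $E_\phi$ at $s$. This is not literally the Brauer character table $(\phi(s))$ but rather the product of the Cartan matrix with the Brauer character table; since both factors are invertible over $\KK$, your invertibility claim still holds and the conclusion is unaffected.

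It is worth noting that the paper itself establishes exactly these triangularity and diagonal-block facts later, as Lemma~\ref{3.1} and Proposition~\ref{3.2}, but for a different purpose: rather than deducing semisimplicity of $\KK T_\FF(G)$ (taken for granted via the citation), the paper uses them to get at the entries of $\cM^{-1}$ and $\cN^{-1}$, which then feed into the main inversion formula. So your proof effectively anticipates ingredients that appear in Section~3 for another reason.
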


By the two propositions above, we can write
$$[M_{P, \phi}^G] = \sum_{(Q, [s]) \in_G \cE(G)}
  \!\!\!\! m_G(Q, s; P, \phi) \, e_{Q, s}^G \; , \;\;\;\;\;\;
  e_{Q, s}^G = \sum_{(P, \phi) \in_G \cC(G)}
  \!\!\!\! m_G^{-1}(P, \phi; Q, s) \, [M_{P, \phi}^G] \; .$$
Some partial results about the coefficients
$m_G(Q, s; P, \phi), m_G^{-1}(P, \phi; Q, s) \in \KK$
will be given in the next section, but we do not have
an explicit general formula for them. Of course,
$$m_G(Q, s; P, \phi) =
  \epsilon_{Q, s}^G [M_{P, \phi}^G]$$
but this equality is not explicit enough to admit
much manipulation. That is why we
shall focus on a different $\ZZ$-basis for $T_\FF(G)$.
Observe that the isomorphism class $[N_{P, \phi}^G]$
depends only on the $G$-orbit of $(P, \phi)$.

\begin{pro} \label{2.3}
As $(P, \phi)$ runs over representatives of the
$G$-orbits of $\cC(G)$, the isomorphism classes
$[N_{P, \phi}^G]$ comprise a $\ZZ$-basis for $T_\FF(G)$.
\end{pro}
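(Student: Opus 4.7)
The plan is to compare the proposed family $\{[N_{P,\phi}^G]\}$ with the basis $\{[M_{P,\phi}^G]\}$ from Proposition \ref{2.1}, and show that the associated transition matrix is unitriangular with respect to an ordering of $G$-orbits by the order of the vertex $P$. Once that is established, $\{[N_{P,\phi}^G]\}$ will inherit from Proposition \ref{2.1} the status of a $\ZZ$-basis.

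First I would verify that $N_{P,\phi}^G$ lies in $T_\FF(G)$. Since $E_\phi$ is an indecomposable projective $\FF \oo{N}_G(P)$-module, it is a direct summand of the regular module $\FF \oo{N}_G(P)$; inflation then realizes $\Inf E_\phi$ as a summand of $\FF[N_G(P)/P]$, which has an evident $p$-permutation basis. Since induction preserves the property of being a trivial source module, $N_{P,\phi}^G$ is a trivial source $\FF G$-module.

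The heart of the argument is the decomposition
$$[N_{P,\phi}^G] \;=\; [M_{P,\phi}^G] \;+\; \sum_{(Q,\psi)} a_{(Q,\psi)} \, [M_{Q,\psi}^G],$$
where the sum ranges over $G$-orbits $(Q,\psi) \in \cC(G)$ with $|Q| < |P|$ and the $a_{(Q,\psi)}$ are non-negative integers. To establish it, I would invoke Green correspondence, using two preliminary observations: inflation of an indecomposable is indecomposable (since the inflation-endomorphism adjunction identifies the endomorphism rings), so $\Inf E_\phi$ is an indecomposable $\FF N_G(P)$-module; and since $\Inf E_\phi$ is a summand of $\FF[N_G(P)/P]$ while having nontrivial Brauer quotient $(\Inf E_\phi)[P] \cong E_\phi$, its vertex is exactly $P$. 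By the definition of $M_{P,\phi}^G$, the Green correspondent of $\Inf E_\phi$ from $N_G(P)$ to $G$ is $M_{P,\phi}^G$. The Mackey-style decomposition attached to Green correspondence then gives
$$N_{P,\phi}^G \;\cong\; M_{P,\phi}^G \oplus X,$$
where every indecomposable summand of $X$ has vertex contained in some intersection $P \cap {}^g P$ with $g \in G \setminus N_G(P)$. Each such intersection is a proper subgroup of $P$ and so has strictly smaller order. Expanding $X$ in the basis of Proposition \ref{2.1} therefore yields only terms $[M_{Q,\psi}^G]$ with $|Q| < |P|$, and the coefficients are non-negative integers because they count multiplicities of isomorphism classes of indecomposable summands.

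To conclude, list representatives of the $G$-orbits of $\cC(G)$ in an order in which $|P|$ is non-decreasing. The decomposition above shows that the change-of-basis matrix from $\{[N_{P,\phi}^G]\}$ to $\{[M_{P,\phi}^G]\}$ is upper unitriangular, hence lies in $\GL(\ZZ)$, so $\{[N_{P,\phi}^G]\}$ is also a $\ZZ$-basis for $T_\FF(G)$. The only step requiring real care is the vertex analysis: one has to confirm that $\Inf E_\phi$ is indecomposable with vertex exactly $P$, and then precisely invoke Green correspondence to guarantee that the leftover summands after induction all have strictly smaller vertex.
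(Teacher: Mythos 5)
Your proposal is correct and follows essentially the same route as the paper: the paper's proof likewise invokes Green correspondence to write $N_{P,\phi}^G$ as $M_{P,\phi}^G$ plus summands with vertices strictly smaller than $P$, and then appeals to Proposition~\ref{2.1} via the resulting unitriangular change of basis. You have simply spelled out the intermediate verifications (that $\Inf E_\phi$ is indecomposable with vertex exactly $P$, and that the leftover vertices lie in proper intersections $P \cap {}^g P$) that the paper leaves implicit.
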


\begin{proof}
By the Green correspondence, $N_{P, \phi}^G$ is the
direct sum of $M_{P, \phi}^G$ and an $\FF G$-module
whose indecomposable direct summands all have
vertices strictly contained in $P$. The assertion now
follows from Proposition \ref{2.1}.
\end{proof}

By Propositions \ref{2.2} and \ref{2.3}, we can
write
$$[N_{P, \phi}^G] = \sum_{(Q, [s]) \in_G \cE(G)}
  \!\!\!\! n_G(Q, s; P, \phi) \, e_{Q, s}^G \; , \;\;\;\;\;\;
  e_{Q, s}^G = \sum_{(P, \phi) \in_G \cC(G)}
  \!\!\!\! n_G^{-1}(P, \phi; Q, s) \, [N_{P, \phi}^G]$$
with $n_G(Q, s; P, \phi), n_G^{-1}(P, \phi; Q, s) \in
\KK$. Observe that
$$n_G(Q, s; P, \phi) =
  \epsilon_{Q, s}^G [N_{P, \phi}^G] \; .$$
We shall give an explicit formula for $n_G(Q, s;
P, \phi)$ in Proposition \ref{3.4} and an explicit
formula for $n_G^{-1}(P, \phi; Q, s)$ in Theorem
\ref{3.6}. As we shall prove in Section 3, the
latter formula is equivalent to the next theorem.

To state the theorem, we need some more
notation. Recall that any finite poset $\cP$ can
be associated with a simplicial complex whose
simplices are the chains in $\cP$. We define
the {\bf reduced Euler characteristic} of $\cP$,
denoted $\tt{\chi}(\cP)$, to be the reduced
Euler characteristic of the simplicial complex.
The {\bf M\"{o}bius function} $\mu : \cP \times
\cP \rightarrow \ZZ$ is defined to be the function
such that, given $x, y \in \cP$, then $\mu(x, x)
= 1$; if $x \not\leq y$ then $\mu(x, y) = 0$; if
$x < y$, writing $(x, y)_\cP = \{ z \in P : x
< z < y \}$, then $\mu(x, y) = \tt{\chi}((x, y)_\cP)$.
It is well-known that, for $x < y$, we have a
recurrence relation $\sum_z \mu(x, z) = 0 =
\sum_z (z, y)$ summed over $z$ such that
$x \leq z \leq y$. We shall be making use of
that recurrence relation in the next section.

Let $\cS_p(G)$ denote the $G$-poset
of $p$-subgroups of $G$. For $g \in G$, the
$\la g \ra$-fixed subposet $\cS_p(G)^{\la g \ra}$
consists of those $p$-subgroups $R$ such that
$g \in N_G(R)$. We write $\mu_g$ to denote the
M\"{o}bius function of $\cS_p(G)^{\la g \ra}$.

\begin{thm} \label{2.4}
Given $(Q, [s]) \in \cE(G)$, then
$$e_{Q, s}^G = \frac{1}{|N_G(Q)|} \sum_{(P,
  \phi) \in \cC(G), \, gP \in \oo{N}_G(P)_{p'} \: : \:
  P \leq Q, \, g \in N_G(Q), \, gQ \in [s]} \dozback
  \dozback |P| \, \phi(g^{-1} P) \, \mu_g(P, Q) \,
  [N_{P, \phi}^G]$$
where $\oo{N}_G(P)_{p'}$ denotes the set of
$p'$-elements of $\oo{N}_G(P)$.
\end{thm}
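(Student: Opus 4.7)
The strategy is to verify the proposed expression for $e_{Q,s}^G$ by testing it against the algebra maps $\epsilon_{Q',s'}^G$. By Proposition \ref{2.2}, an element of $\KK T_\FF(G)$ equals $e_{Q,s}^G$ if and only if $\epsilon_{Q',s'}^G$ sends it to $1$ when $(Q,[s])$ and $(Q',[s'])$ are $G$-conjugate, and to $0$ otherwise. Since $\epsilon_{Q',s'}^G[N_{P,\phi}^G] = n_G(Q',s';P,\phi)$, the verification reduces to first computing $n_G$ in closed form and then confirming an inversion identity.

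The first step is an explicit formula for $n_G(Q',s';P,\phi)$, which is the Brauer character value of $N_{P,\phi}^G[Q']$ at $s'$. Since $N_{P,\phi}^G = \Ind_{N_G(P)}^G \Inf_{\oo{N}_G(P)}^{N_G(P)} E_\phi$, the Brauer quotient of an induced module decomposes via a Mackey-type formula as a sum indexed by double cosets $N_G(P) \backslash G / N_G(Q')$. Crucially, $E_\phi$ is a projective indecomposable $\FF \oo{N}_G(P)$-module and hence has vanishing Brauer quotient at every nontrivial $p$-subgroup of $\oo{N}_G(P)$; this forces only those representatives $g$ with $Q'{}^g \leq P$ to contribute. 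Evaluating the resulting character at $s'$ produces a formula of the shape
\begin{equation*}
n_G(Q',s';P,\phi) = \sum_g \phi \bigl( (s')^g P \bigr),
\end{equation*}
summed over suitable representatives $g$ satisfying the containment and $p'$-regularity constraints (this will be recorded as Proposition \ref{2.2}'s companion in the next section).

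The second step is to substitute this formula into the proposed right-hand side and apply $\epsilon_{Q',s'}^G$. After interchanging the order of summation, the inner sum over the irreducible Brauer characters $\phi$ of $\FF \oo{N}_G(P)$ collapses by the column orthogonality relation for Brauer characters on $p'$-elements: $\sum_\phi \phi(g^{-1} P) \phi((s')^h P)$ vanishes unless $g^{-1} P$ and $(s')^h P$ are $\oo{N}_G(P)$-conjugate, in which case it equals the centralizer order $|C_{\oo{N}_G(P)}(gP)|$. This reduces the remaining verification to a purely combinatorial sum over the intermediate $p$-subgroups in $\cS_p(G)^{\la s \ra}$, weighted by the M\"{o}bius function $\mu_g$. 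The recurrence $\sum_{P \leq R \leq Q} \mu_g(P,R) = \delta_{P,Q}$ then collapses the telescoping sum, leaving only the diagonal term and yielding the desired Kronecker delta.

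The main obstacle is the delicate bookkeeping of normalizing constants. Three interacting layers of data must be tracked: conjugation of $p$-subgroups in $G$ (producing the poset $\cS_p(G)^{\la s \ra}$), $p'$-element conjugacies in each quotient $\oo{N}_G(P)$ (where Brauer characters live), and the identification of pairs $(P, gP)$ arising from elements $g \in N_G(Q)$ with $gQ \in [s]$. The normalizing factor $|P|/|N_G(Q)|$ in the claimed formula must match, after character orthogonality and M\"{o}bius collapse, the centralizer orders $|C_{\oo{N}_G(P)}(gP)|$ together with the orbit-counting factors $|N_G(P)|$ and $|N_G(Q)|$ that appear en route; the factor $|P|$ in the numerator arises precisely from passing between counts in $N_G(P)$ and in its quotient $\oo{N}_G(P) = N_G(P)/P$.
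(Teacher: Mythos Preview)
Your overall strategy matches the paper's: compute the species values $n_G(Q',s';P,\phi)=\epsilon_{Q',s'}^G[N_{P,\phi}^G]$ explicitly via a Mackey-type decomposition of the Brauer quotient, then verify the inversion identity using an orthogonality relation together with the M\"{o}bius recurrence on the poset $\cS_p(G)^{\la g\ra}$. The paper carries this out by proving a formula for $n_G^{-1}(P,\phi;Q,s)$ (Theorem~\ref{3.6}) and checking $\sum_{(Q,[s])}\nu_G^{-1}(P,\phi;Q,s)\,n_G(Q,s;R,\psi)=\delta$, which is the transpose of the verification you propose; this is a matter of presentation, not substance.

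However, there is a genuine error in your proposal at the level of the Brauer character computation. The module whose Brauer quotient you are taking is built from $E_\phi$, the projective cover of the simple module with Brauer character $\phi$. The Brauer character of $E_\phi$ is $\hh{\phi}$, not $\phi$, and these differ in general by the Cartan matrix. So the correct shape of the formula (this is Proposition~\ref{3.4}) is
\[
n_G(Q',s';P,\phi)=\sum_g \hh{\phi}\bigl({}^g s'\,.\,P\bigr)\,/\,|N_G(P)|,
\]
with $\hh{\phi}$ in place of your $\phi$. Consequently, the orthogonality you invoke,
\[
\sum_\phi \phi(g^{-1}P)\,\phi\bigl((s')^h P\bigr),
\]
does not collapse as you claim: there is no column orthogonality for irreducible Brauer characters alone. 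The relation that actually does the work is the second orthogonality between irreducible Brauer characters and characters of projective indecomposables,
\[
\sum_\phi \phi(x^{-1})\,\hh{\phi}(y)=\delta_{[x],[y]}\,|C(x)|
\]
for $p'$-elements $x,y$ (see Feit \cite[4.3.3]{Fei82}, as used in Proposition~\ref{3.2}). Once you replace $\phi$ by $\hh{\phi}$ in your $n_G$ formula and use this mixed orthogonality, the rest of your argument goes through and agrees with the paper's.
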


The formula makes sense because the conditions
$P \leq Q$ and $g \in N_G(Q)$ imply that the
$p'$-element $gP \in \oo{N}_G(P)_{p'}$
determines the $p'$-element $gQ \in
\oo{N}_G(Q)_{p'}$.

\begin{lem} \label{2.5}
Given $p$-subgroups $P, Q \leq G$ and $g \in
N_G(P, Q)$ such that $\mu_g(P, Q) \neq 0$, then
$\Phi(Q) \leq P \leq Q$ where $\Phi(Q)$ denotes
the Frattini subgroup of $Q$. So the sum in
Theorem \ref{2.4} can be taken over only those
indices such that $\Phi(Q) \leq P \leq Q$.
\end{lem}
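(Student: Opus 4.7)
The plan is to argue by contradiction. If $P \not\leq Q$ then $\mu_g(P, Q) = 0$ by convention, and if $P = Q$ then $\Phi(Q) \leq Q = P$ is immediate. So I would reduce at once to the case $P < Q$ with $\Phi(Q) \not\leq P$, and show that $\mu_g(P, Q) = 0$ in that situation.

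The first step is to set $P_0 = P \Phi(Q)$ and gather three elementary properties. Because $\Phi(Q)$ is characteristic in $Q$ and $g$ normalizes $Q$, the subgroup $\Phi(Q)$, and hence $P_0$, is $g$-invariant. By the Frattini non-generator property, any $R \leq Q$ satisfying $R \Phi(Q) = Q$ must equal $Q$, so $P_0 < Q$, and more generally $R \Phi(Q) < Q$ whenever $R < Q$. Finally, the assumption $\Phi(Q) \not\leq P$ gives $P < P_0$. Thus $P_0$ belongs to the open $g$-invariant interval $(P, Q)^g$ inside $\cS_p(G)^{\la g \ra}$.

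The heart of the argument is a deformation-retraction. I would define $f \: : \: (P, Q)^g \to (P, Q)^g$ by $f(R) = R \Phi(Q)$. The three observations above verify that $f$ is well-defined, order-preserving, idempotent, and satisfies $f(R) \geq R$; in other words, $f$ is a closure operator on $(P, Q)^g$ whose image is precisely $[P_0, Q)^g$. By the standard poset homotopy lemma, as in Quillen's work on the $p$-subgroup complex, the inclusion $[P_0, Q)^g \hookrightarrow (P, Q)^g$ is then a homotopy equivalence of order complexes, so the two posets have equal reduced Euler characteristics.

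To conclude, $[P_0, Q)^g$ has minimum element $P_0$, so its order complex is a cone with apex $P_0$ and is contractible; hence $\tt{\chi}([P_0, Q)^g) = 0$. Therefore $\mu_g(P, Q) = \tt{\chi}((P, Q)^g) = 0$, contradicting the hypothesis. The main obstacle I anticipate is articulating the closure-operator/homotopy step cleanly; everything else is bookkeeping around the Frattini non-generator property and the preservation of characteristic subgroups under any element normalizing $Q$.
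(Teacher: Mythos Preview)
Your proposal is correct and follows essentially the same approach as the paper: both argue by contradiction, assuming $\Phi(Q) \not\leq P$, and use the monotone $g$-equivariant map $R \mapsto R\Phi(Q)$ on the open interval $(P,Q)$ in $\cS_p(G)^{\la g \ra}$ to contract that interval to a cone with apex $P\Phi(Q)$, forcing $\mu_g(P,Q)=0$. The paper phrases this as a ``conical contraction'' in the sense of Benson, while you phrase it via Quillen's closure-operator homotopy lemma, but these are the same argument; your version simply makes explicit the checks (Frattini non-generation to ensure $R\Phi(Q)<Q$, and $g$-invariance of $\Phi(Q)$ via characteristicity) that the paper leaves implicit.
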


\begin{proof}
We apply the technique of conical contraction
of posets, described in Benson
\cite[6.4.5, 6.4.6]{Ben91}. The condition
$\mu_g(P, Q) \neq 0$ plainly implies that
$P \leq Q$. For a contradiction, suppose
that $\Phi(Q) \not\leq P$. Let $\cR$ be the
open interval $(P, Q)$ in $\cS_p(G)^{\la g \ra}$.
Then $\cR$ admits a contraction given by
$R \mapsto R \Phi(Q) \mapsto P \Phi(Q)$ for
$R \in \cR$. This contradicts the condition
that $\mu_g(P, Q) \neq 0$.
\end{proof}

\begin{cor} \label{2.6}
Let $(Q, [s]) \in \cE(G)$ and let
$$I = \sum_{P \: : \: \Phi(Q) \leq P \leq Q}
  \bbb_G^\ppp \Ind \bbb_{N_G(P)}^\ppp
  (\KK T_\FF(N_G(P))) \; .$$
Then $I$ is an ideal of $\KK T_\FF(G)$ and
$e_{Q, s}^G \in I$.
\end{cor}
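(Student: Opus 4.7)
The plan is to handle the two assertions separately. The substantive part is showing that $I$ is an ideal; membership of $e_{Q,s}^G$ will follow almost at once from Theorem \ref{2.4} sharpened by Lemma \ref{2.5}.

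For the ideal property, I would invoke the module-theoretic projection (Frobenius) formula
$$\Ind_H^G(L) \otimes_\FF M \;\cong\; \Ind_H^G\bigl(L \otimes_\FF \Res_H^G(M)\bigr),$$
natural in an $\FF H$-module $L$ and an $\FF G$-module $M$. Since induction and restriction preserve the trivial source property (see \cite[Section 2]{BT10}), they descend to $\KK$-linear maps between the trivial source rings, and the above isomorphism translates to
$$\Ind_H^G(x)\cdot y \;=\; \Ind_H^G\bigl(x \cdot \Res_H^G(y)\bigr)$$
in $\KK T_\FF(G)$ for all $x \in \KK T_\FF(H)$ and $y \in \KK T_\FF(G)$. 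Taking $H = N_G(P)$ shows that $\Ind_{N_G(P)}^G(\KK T_\FF(N_G(P)))$ absorbs products with arbitrary elements of $\KK T_\FF(G)$; commutativity of $\KK T_\FF(G)$ makes this a two-sided ideal. Summing over the $p$-subgroups $P$ with $\Phi(Q) \leq P \leq Q$ shows $I$ is an ideal.

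For membership, I would write $e_{Q,s}^G$ using the formula in Theorem \ref{2.4}. By Lemma \ref{2.5}, only the terms with $\Phi(Q) \leq P \leq Q$ contribute. For each such $P$, the module $N_{P,\phi}^G = \bbb_G^\ppp \Ind \bbb_{N(P)}^\ppp \Inf \bbb_{\oo{N}(P)}^\ppp(E_\phi)$ is by construction an induction from $N_G(P)$ of $\Inf_{\oo{N}_G(P)}^{N_G(P)}(E_\phi)$. The latter is a trivial source $\FF N_G(P)$-module, because $E_\phi$ is a direct summand of $\FF \oo{N}_G(P)$ and hence its inflation is a direct summand of the permutation $\FF N_G(P)$-module $\FF[N_G(P)/P]$. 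Therefore $[N_{P,\phi}^G]$ lies in $\Ind_{N_G(P)}^G(\KK T_\FF(N_G(P))) \subseteq I$, and taking the $\KK$-linear combination dictated by Theorem \ref{2.4} puts $e_{Q,s}^G$ in $I$.

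The only step that requires any care is the transfer of the projection formula from the module category to the trivial source ring; once that is set up, the corollary is essentially a direct reading of Theorem \ref{2.4} through the filter of Lemma \ref{2.5}.
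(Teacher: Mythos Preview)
Your proof is correct and follows the same approach as the paper's: the paper simply writes ``By the Frobenius relations, $I$ is an ideal. By Lemma~\ref{2.5}, $e_{Q,s}^G \in I$,'' and your argument is a faithful unpacking of exactly those two sentences. In particular, your elaboration of the projection formula and the verification that $\Inf_{\oo{N}_G(P)}^{N_G(P)}(E_\phi)$ is a trivial source module make explicit precisely the points the paper leaves to the reader.
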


\begin{proof}
By the Frobenius relations, $I$ is an ideal.
By Lemma \ref{2.5}, $e_{Q, s}^G \in I$.
\end{proof}

\section{Proof of the idempotent formula}

Let $\cM$ be the matrix such that the rows are
indexed by the $G$-orbits of elements $(Q, [s])
\in \cE(G)$, the columns are indexed by the
$G$-orbits of elements $(P, \phi) \in \cC(G)$
and the $((Q, [s]), (P, \phi))$ entry is
$m_G(Q, s; P, \phi)$. Let $\cN$ be the matrix
with the same indexing of rows and columns
and with $((Q, [s]), (P, \phi))$ entry
$n_G(Q, s; P, \phi)$. We shall be making a
study of the matrices $\cM$, $\cM^{-1}$,
$\cN$, $\cN^{-1}$.

\begin{lem} \label{3.1}
Let $(P, \phi) \in \cC(G)$ and $(Q, [s]) \in \cE(G)$.

\noin {\bf (1)} If $Q \not\leq_G P$, then
$m_G(Q, s; P, \phi) = n_G(Q, s; P, \phi) = 0$.

\noin {\bf (2)} If $P \not\leq_G Q$, then
$m_G^{-1}(P, \phi; Q, s) =
n_G^{-1}(P, \phi; Q, s) = 0$.
\end{lem}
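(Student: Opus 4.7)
The plan is to handle (1) by a direct analysis of Brauer quotients, and then derive (2) from (1) by a spanning/dimension-count argument.

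For part (1), recall from the discussion preceding the statement that
$m_G(Q,s;P,\phi)=\epsilon_{Q,s}^G[M_{P,\phi}^G]$ and
$n_G(Q,s;P,\phi)=\epsilon_{Q,s}^G[N_{P,\phi}^G]$,
so each is the value at $s$ of the Brauer character of the corresponding Brauer quotient. It therefore suffices to show that $M_{P,\phi}^G[Q]$ and $N_{P,\phi}^G[Q]$ both vanish when $Q\not\leq_G P$. For any trivial source module, the Brauer quotient at $Q$ is nonzero only if $Q$ is $G$-subconjugate to the vertex of some indecomposable summand. Now $M_{P,\phi}^G$ is indecomposable with vertex $P$, settling the $M$-case. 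For $N_{P,\phi}^G$, the module $E_\phi$ is projective for $\oo{N}_G(P)$; its inflation to $N_G(P)$ has vertex $P$ (the standard fact that inflating a projective $\FF(H/N)$-module through a normal $p$-subgroup $N\trianglelefteq H$ yields a $p$-permutation module with vertex $N$); and induction from $N_G(P)$ to $G$ yields a module whose indecomposable summands all have vertices $G$-subconjugate to $P$. Hence $N_{P,\phi}^G[Q]=0$ when $Q\not\leq_G P$ as well.

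For part (2), I would set
$$V_Q=\sspan_\KK\{[M_{P,\phi}^G]:(P,\phi)\in\cC(G),\,P\leq_G Q\},\quad W_Q=\bigoplus_{\substack{(Q',[s'])\in\cE(G)\\ Q'\leq_G Q}}\KK\,e_{Q',s'}^G.$$
Applying part (1) together with Proposition \ref{2.2} to each generator of $V_Q$, every $[M_{P,\phi}^G]$ with $P\leq_G Q$ lies in the span of the $e_{Q',s'}^G$ with $Q'\leq_G P\leq_G Q$, giving $V_Q\subseteq W_Q$. The conjugation action of $N_G(P')$ on $\oo{N}_G(P')$ factors through inner automorphisms, so its induced actions on irreducible Brauer characters of $\oo{N}_G(P')$ and on $p'$-conjugacy classes of $\oo{N}_G(P')$ are both trivial; hence for each $G$-conjugacy class $[P']$ the number of $G$-orbits of pairs $(P',\phi')$ over $[P']$ equals the number of $G$-orbits of pairs $(P',[s'])$ over $[P']$, both equalling the number of $p'$-classes of $\oo{N}_G(P')$. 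Summing over classes $[P']\leq_G Q$ gives $\dim V_Q=\dim W_Q$, so $V_Q=W_Q$ and in particular $e_{Q,s}^G\in V_Q$. Uniqueness of the basis expansion $e_{Q,s}^G=\sum m_G^{-1}(P,\phi;Q,s)[M_{P,\phi}^G]$ then forces $m_G^{-1}(P,\phi;Q,s)=0$ whenever $P\not\leq_G Q$. The same argument with $N_{P,\phi}^G$ in place of $M_{P,\phi}^G$, invoking Proposition \ref{2.3} for the basis, treats $n_G^{-1}$.

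The main obstacle is the dimension count in part (2): one must verify that the two orbit counts agree for every $G$-conjugacy class of $p$-subgroups and reconcile the $\leq_G$ constraint on representatives with the description of $V_Q$ and $W_Q$. The part (1) step is, by contrast, a routine consequence of standard facts about vertices and Brauer quotients of $p$-permutation modules.
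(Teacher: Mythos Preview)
Your proof is correct and follows the same overall strategy as the paper (vanishing of Brauer quotients for (1), then triangularity for (2)), but the execution differs in both parts.

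For (1), the paper simply observes that $M_{P,\phi}^G$ is a direct summand of $N_{P,\phi}^G$, which in turn is a direct summand of the permutation module $\FF G/P$; since $(\FF G/P)[Q]=\FF(G/P)^Q=0$ when $Q\not\leq_G P$, both Brauer quotients vanish at once. This avoids your separate vertex analyses for $M_{P,\phi}^G$ and $N_{P,\phi}^G$, though of course your argument is also valid.

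For (2), the paper packages your $V_Q\subseteq W_Q$ observation as the statement that $\cM$ and $\cN$ are block upper triangular when the blocks are indexed by conjugacy classes of $p$-subgroups listed in non-decreasing order, and then simply invokes the fact that the inverse of an invertible block upper triangular matrix is again block upper triangular. Your explicit dimension count (matching irreducible Brauer characters of $\oo N_G(P')$ with $p'$-classes of $\oo N_G(P')$) is not needed in that formulation: once the full change-of-basis matrix is known to be square and invertible, squareness of the diagonal blocks is forced by the triangular shape, and the rest is linear algebra. Your route has the virtue of making the equality $\dim V_Q=\dim W_Q$ transparent, while the paper's buys brevity.
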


\begin{proof}
The $\FF G$-module $M_{P, \phi}^G$ is a
direct summand of $N_{P, \phi}^G$ which is
a direct summand of the permutation
$\FF G$-module $\FF G/P$. If $Q \not\leq_G
P$, then $\FF G/P[Q] = 0$, hence
$M_{P, \phi}^G[Q] = N_{P, \phi}^G[Q] = 0$.
Part (1) follows. Thus, listing the conjugacy
classes of $p$-subgroups of $G$ in
non-decreasing order, and regarding $\cM$
and $\cN$ as arrays of submatrices with rows
indexed by the $p$-subgroup $Q$ and
columns indexed by the $p$-subgroup $P$,
then $\cM$ and $\cN$ have an upper
triangular form. The inverses $\cM^{-1}$
and $\cN^{-1}$ have the same upper triangular
form, hence part (2).
\end{proof}

Let $\hh{\phi}$ denote the Brauer character
of the indecomposable projective
$\FF \oo{N}_G(P)$-module $E_\phi$.

\begin{pro} \label{3.2}
Given a $p$-subgroup $P$ of $G$, an
irreducible Brauer character $\phi$ of
$\FF \oo{N}_G(P)$ and a $p'$-element $s$
of $\oo{N}_G(P)$, then

\noin {\bf (1)} $m_G(P, s; P, \phi) =
n_G(P, s; P, \phi) = \hh{\phi}(s)$,

\noin {\bf (2)} $m_G^{-1}(P, \phi; P, s)
= n_G^{-1}(P, \phi; P, s) = \phi(s^{-1})
/ |C_{\oo{N}_G(P)}(s)|$.
\end{pro}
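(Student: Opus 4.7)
The plan for part (1) is essentially definition-chasing. Since $m_G(P,s;P,\phi) = \epsilon_{P,s}^G[M_{P,\phi}^G]$ is by definition the value at $s$ of the Brauer character of $M_{P,\phi}^G[P]$, and the excerpt records the isomorphism $M_{P,\phi}^G[P] \cong E_\phi$, we immediately get $m_G(P,s;P,\phi) = \hh{\phi}(s)$. To get the same value for $n_G(P,s;P,\phi)$, I would invoke the Green correspondence decomposition used in the proof of Proposition \ref{2.3}: $N_{P,\phi}^G \cong M_{P,\phi}^G \oplus V$, where each indecomposable summand of $V$ has vertex strictly contained in $P$. Since the Brauer quotient at $P$ of any indecomposable $\FF G$-module whose vertex does not contain $P$ vanishes, $V[P] = 0$ and hence $N_{P,\phi}^G[P] \cong E_\phi$ as well.

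For part (2), the plan is to exploit the block upper triangular form of $\cM$ and $\cN$ given by Lemma \ref{3.1}. For any $g \in N_G(P)$, the induced action on $\oo{N}_G(P) = N_G(P)/P$ is conjugation by the image $\oo{g} = gP$, which is inner. Hence $N_G(P)$ fixes every $\phi \in \Irr(\FF\oo{N}_G(P))$ and every $\oo{N}_G(P)$-conjugacy class $[s]$ of $p'$-elements, so the diagonal $P$-block of each of $\cM$ and $\cN$ is a square matrix indexed by $\Irr(\FF\oo{N}_G(P))$ and by the $p'$-conjugacy classes of $\oo{N}_G(P)$. By part (1), both diagonal blocks have $([s],\phi)$-entry $\hh{\phi}(s)$, so they coincide.

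The core step is then to invert this common diagonal block using the Brauer second orthogonality relation for projective characters,
$$\sum_{\phi \in \Irr(\FF H)} \hh{\phi}(s) \, \phi(t^{-1}) \;=\; \delta_{[s],[t]} \, |C_H(s)| \; ,$$
applied with $H = \oo{N}_G(P)$; this identifies the two-sided inverse of the $P$-block as the matrix with $(\phi,[s])$-entry $\phi(s^{-1})/|C_{\oo{N}_G(P)}(s)|$. Because a block upper triangular matrix has, on the diagonal of its inverse, the inverses of its own diagonal blocks, those same entries appear in $\cM^{-1}$ and $\cN^{-1}$ in the $((P,\phi),(P,s))$-positions, yielding (2). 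I expect no serious obstacle beyond careful bookkeeping of the $G$-orbit indexing of rows and columns; all of the substance is packed into the single orthogonality identity above combined with the upper triangular structure already established.
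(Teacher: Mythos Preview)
Your proposal is correct and follows essentially the same route as the paper: part (1) via the isomorphisms $M_{P,\phi}^G[P]\cong N_{P,\phi}^G[P]\cong E_\phi$, and part (2) by inverting the common diagonal $P$-block of the upper-triangular matrices $\cM$ and $\cN$ using the Brauer orthogonality relation. Your additional justification for $N_{P,\phi}^G[P]\cong E_\phi$ via the Green correspondence decomposition, and your remark that $N_G(P)$ acts by inner automorphisms on $\oo{N}_G(P)$ (so the $P$-block is genuinely indexed by the $p'$-classes and irreducible Brauer characters of $\oo{N}_G(P)$), are details the paper leaves implicit.
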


\begin{proof}
Part (1) holds because $M_{P, \phi}^G[P]
\cong N_{P, \phi}^G[P] \cong E_\phi$. For
part (2), we use the orthogonality relation
for irreducible Brauer characters, which can
be found in Feit \cite[4.3.3]{Fei82}. Fixing
$P$, let $\cL$ be the matrix such that the
rows are indexed by the conjugacy classes
$[s]$ of $p'$-elements $s$ of $\oo{N}_G(P)$,
the columns are indexed by the irreducible
Brauer characters $\phi$ of $\FF \oo{N}_G(P)$
and the $([s], \phi)$ entry is $\hh{\phi}(s)$.
By part (1), $\cL$ is a submatrix of $\cM$
and $\cN$. Since $\cM$, $\cN$, $\cM^{-1}$,
$\cN^{-1}$ all have an upper triangular form
in the sense described in the proof of Lemma
\ref{3.1}, $\cL^{-1}$ is a submatrix of
$\cM^{-1}$ and $\cN^{-1}$. By the
orthogonality relation, $\cL^{-1}$ has
$(\phi, [s])$ entry
\begin{equation*}
\phi(s^{-1}) |[s]| / | \oo{N}(P)| = \phi(s^{-1})
  / |C_{\oo{N}(P)}(s)| \; . \qedhere
\end{equation*}
\end{proof}

We have nothing further to say about the
matrices $\cM$ and $\cM^{-1}$. Our concern,
for the rest of this section, will be with the
matrices $\cN$ and $\cN^{-1}$. We regard
$\cN$ as an analogue of the table of marks.
Our next main task is to find a general
formula for the entries of $\cN$.

\begin{lem} \label{3.3}
Let $P$ and $Q$ be $p$-subgroups of $G$. Let
$E$ be a projective $\FF \oo{N}_G(P)$-module and
let $N = \bbb_G^\ppp \Ind \bbb_{N_G(P)}^\ppp
\Inf \bbb_{\oo{N}_G(P)}^\ppp (E)$. Then
$$N[Q] = \bigoplus_{N_G(Q) g N_G(P) \subseteq G
  \: : \: Q \leq {}^g \! P} \bbb_{\oo{N}_G(Q)}^\ppp
  \Def \bbb_{N_G(Q)}^\ppp \Ind \bbb_{N_G(Q)
  \cap {}^g \! N_G(P)}^\ppp ({}^g \! E)$$
where ${}^g \! E = \bbb_{N_G(Q) \cap {}^g \!
N_G(P)}^\ppp\Con \bbb_{N_G(Q)^g \cap
N_G(P)}^g \Res \bbb_{N_G(P)}^\ppp
\Inf \bbb_{\oo{N}_G(P)}^\ppp (E)$.
\end{lem}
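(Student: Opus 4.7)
The plan is to apply the ordinary Mackey decomposition to $\Res_{N_G(Q)}^G N$ and compute the Brauer quotient of each resulting summand. Setting $\tt E = \Inf_{\oo{N}_G(P)}^{N_G(P)}(E)$ so that $N = \Ind_{N_G(P)}^G \tt E$, the Mackey formula gives
\begin{equation*}
\Res_{N_G(Q)}^G N \cong \bigoplus_{g \in [N_G(Q) \backslash G / N_G(P)]} \Ind_{H_g}^{N_G(Q)} X_g,
\end{equation*}
where $H_g = N_G(Q) \cap {}^g N_G(P)$ and $X_g$ is the $\FF H_g$-module ${}^g E$ as defined in the lemma statement. Since the Brauer quotient functor $(-)[Q]$ is additive, $N[Q]$ decomposes summand by summand, reducing the analysis to a single double coset representative.

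I would then invoke two standard compatibility properties of the Brauer quotient. A Brauer--Mackey identity asserts that for $Q \leq L \leq K$ with $Q$ normal in $K$, one has $(\Ind_L^K Y)[Q] \cong \Ind_{L/Q}^{K/Q}(Y[Q])$ for any trivial source $\FF L$-module $Y$, while if $Q \not\leq L$ the induced module has vertex strictly contained in $Q$ so that its Brauer quotient at $Q$ vanishes. Applied with $L = H_g$ and $K = N_G(Q)$ (using $Q \unlhd N_G(Q)$), this eliminates the summands with $Q \not\leq {}^g N_G(P)$ and rewrites the remaining ones as $\Ind_{H_g/Q}^{\oo{N}_G(Q)}(X_g[Q])$. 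A Brauer--inflation identity asserts that for any normal $p$-subgroup $V \unlhd H$ and any $p$-subgroup $Q \leq H$, $(\Inf_{H/V}^H Y)[Q] \cong Y[\oo Q]$ with $\oo Q = QV/V$; this follows because on an inflated module $\tr_R^Q$ acts as $(|Q \cap V|/|R \cap V|) \cdot \tr_{\oo R}^{\oo Q}$, whose coefficient is a power of $p$ vanishing in characteristic $p$ unless $R \supseteq Q \cap V$, with the surviving $R$ in bijection with $\oo R < \oo Q$.

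Since ${}^g E$ is projective over $\FF \oo{N}_G({}^g P)$, its restriction $Y_g$ to $H_g/{}^g P$ is projective; writing $X_g = \Inf_{H_g/{}^g P}^{H_g}(Y_g)$, the Brauer--inflation identity gives $X_g[Q] \cong Y_g[\oo Q]$, which vanishes whenever $\oo Q \neq 1$ (equivalently, whenever $Q \not\leq {}^g P$) because projective modules have vanishing Brauer quotient at every nontrivial $p$-subgroup. Hence only the double cosets with $Q \leq {}^g P$ survive, and for those $X_g[Q] \cong Y_g$ as an $H_g/Q$-module. Combining with the standard commutation $\Ind \circ \Inf = \Inf \circ \Ind$ (valid because $Q$ is normal in both $H_g$ and $N_G(Q)$) and the identity $\Def \circ \Inf = \id$ rewrites the surviving summand as $\Def_{N_G(Q)}^{\oo{N}_G(Q)} \Ind_{H_g}^{N_G(Q)}({}^g E)$, matching the lemma. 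The main technical obstacle is rigorously establishing the two compatibility identities; both are folklore in the theory of $p$-permutation modules and reduce to careful bookkeeping of relative trace maps modulo $p$.
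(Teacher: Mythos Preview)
Your argument is essentially correct and reaches the same conclusion, but it proceeds quite differently from the paper's proof. The paper does not invoke the abstract Brauer--Mackey and Brauer--inflation compatibilities; instead it decomposes $N$ over the finer $(Q,N_G(P))$ double cosets, writes each piece $N_g=\FF QgN(P)\otimes_{\FF N(P)}E$, and argues directly with a $P$-stable basis: since $E$ is a summand of the regular $\FF\oo N_G(P)$-module, the $p$-subgroup $R=Q^gP/P$ permutes a basis of $E$ freely, so $N_g[Q]=0$ unless $R=1$, i.e.\ unless $Q\leq{}^gP$, in which case $N_g[Q]=N_g$; the pieces are then regrouped into $(N_G(Q),N_G(P))$ double cosets. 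Your route is more functorial and generalises immediately to other situations where the two compatibility identities are available, while the paper's route is more self-contained and avoids appealing to folklore. One small correction: you write $X_g=\Inf_{H_g/{}^gP}^{H_g}(Y_g)$, but ${}^gP$ need not be contained in $H_g=N_G(Q)\cap{}^gN_G(P)$ (even when $Q\leq{}^gP$, since ${}^gP$ need not normalise $Q$); the inflation is through $V={}^gP\cap H_g\unlhd H_g$, and one should take $\oo Q=QV/V$. With that fix the rest of your bookkeeping goes through unchanged, because $Q\leq{}^gP$ and $Q\leq H_g$ together give $Q\leq V$, so $\oo Q=1$ exactly when $Q\leq{}^gP$, and then $X_g[Q]=X_g$ viewed over $H_g/Q$.
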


\begin{proof}
As a direct sum of $\FF Q$-modules,
$$N = \bigoplus_{Q g N(P) \subseteq G} N_g$$
where $N_g = \FF Q g N(P) \otimes_{\FF
N(P)} E$. We have
$$N_g \cong \bbb_Q^\ppp \Ind
  \bbb_{Q \cap {}^g \! N(P)}^\ppp \Res
  \bbb_{{}^g \! N(P)}^\ppp (\FF g N(P)
  \otimes_{\FF N(P)} E) \; .$$
So if $Q \not\leq {}^g \! N(P)$ then $N_g[Q] = 0$.
Suppose that $Q \leq {}^g \! N(P)$. Then
$$N_g = \bbb_Q^\ppp \Res \bbb_{{}^g \!
  N(P)}(\FF g N(P) \otimes_{\FF N(P)} E) \; .$$
Let $R = Q^g . P/P$. Since $R$ is a $p$-subgroup
of $\oo{N}(P)$ and $E$ is a direct summand of the
regular $\FF \oo{N}(P)$-module, $E$ has a basis
$B_g$ upon which $R$ acts fixed-point-freely.
The basis $g \otimes B_g$ of $N_g$ is
$Q$-stable. If $Q \not\leq {}^g \! P$, then $R$
is non-trivial and every $Q$-orbit of
$g \otimes B_g$ is non-singleton, hence
$N_g[Q] = 0$. On the other hand, if $Q \leq
{}^g \! P$, then $R$ is trivial and $Q$ fixes
$g \otimes B_g$, hence we can make
an identification $N_g[Q] = N_g$. Thus, we
can make an identification
$$N[Q] = \bigoplus_{QgN(P) \subseteq
  G \: : \: Q \leq {}^g \! P} N_g = \bigoplus_{N(Q)
  g N(P) \subseteq G \: : \: Q \leq {}^g \! P}
  \FF N(Q) g N(P) \otimes_{\FF N(P)} E \; .$$
As an $\FF N_G(Q)$-module fixed by $Q$,
we have
\begin{equation*}
  \FF N(Q) g N(P) \otimes_{\FF N(P)} E \cong
  \bbb_{N(Q)}^\ppp \Ind \bbb_{N(Q)
  \cap {}^g \! N(P)}^\ppp ({}^g \! E) \; . \qedhere
\end{equation*}
\end{proof}

\begin{pro} \label{3.4}
Let $(Q, [s]) \in \cE(G)$ and $(R, \psi) \in \cC(G)$.
Write $s = tQ$ with $t \in N_G(Q)$. Then
$$n_G(Q, s; R, \psi) = \sum_{g \in G \: : \:
  {}^g Q \leq R, \, {}^g t \in N_G(R)} \hh{\psi}
  ({}^g t . R) / |N_G(R)| \; .$$
\end{pro}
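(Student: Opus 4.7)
The plan is to start directly from the definition
$n_G(Q, s; R, \psi) = \epsilon_{Q,s}^G[N_{R,\psi}^G]$, which is the value at $s$ of the Brauer character of the $\FF \oo{N}_G(Q)$-module $N_{R,\psi}^G[Q]$. Apply Lemma \ref{3.3} with the role of $P$ played by $R$ and with $E = E_\psi$; this presents $N_{R,\psi}^G[Q]$ as a direct sum of modules
$$X_g = \bbb_{\oo{N}(Q)}^\ppp \Def \bbb_{N(Q)}^\ppp \Ind \bbb_{N(Q) \cap {}^g N(R)}^\ppp ({}^g \! E_\psi)$$
indexed by double coset representatives $g \in N_G(Q) \backslash G / N_G(R)$ with $Q \leq {}^g R$. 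So $n_G(Q, s; R, \psi) = \sum_g \chi_{X_g}(s)$.

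Next, lift $s$ to a $p'$-element $t \in N_G(Q)$ (possible since $s$ is $p'$ in $\oo{N}_G(Q)$); then the Brauer character of $X_g$ at $s$ equals the Brauer character of the pre-deflation module $\Ind_{N(Q) \cap {}^g N(R)}^{N(Q)}({}^g \! E_\psi)$ at $t$, because $Q$ acts trivially on the Brauer quotient. Note that the Brauer character of $E_\psi$ is $\hh{\psi}$, so the Brauer character of ${}^g \! E_\psi$ at $h \in N(Q) \cap {}^g N(R)$ is $\hh{\psi}(h^g R)$. The standard induced-character formula then yields
$$\chi_{X_g}(t) = \frac{1}{|N(Q) \cap {}^g N(R)|} \sum_{x \in N(Q) \: : \: x^{-1} t x \in {}^g N(R)} \hh{\psi}\bigl((x^{-1}tx)^g R\bigr) \; ,$$
where I have used that $x, t \in N(Q)$ forces $x^{-1}tx \in N(Q)$ automatically, so only the condition $x^{-1}tx \in {}^g N(R)$ is nontrivial.

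The final step is to convert the iterated double-coset/inner sum into a single sum over elements of $G$. Substitute $y = xg$, so the inner sum is parametrized by the left coset $N(Q) g$ with the condition $y^{-1} t y \in N(R)$. Because every $y \in G$ lies in a unique double coset $N(Q) g N(R)$, and because a double coset $N(Q) g N(R)$ is a disjoint union of $|N(R)| / |N(Q) \cap {}^g N(R)|$ such left cosets (running $g$ through $gr$, $r \in N(R)$), one shows via a transversal computation that
$$\sum_{g \text{ DCR}, \, Q \leq {}^g R} \chi_{X_g}(t) = \frac{1}{|N_G(R)|} \sum_{y \in G \: : \: Q \leq {}^y R, \, y^{-1} t y \in N_G(R)} \hh{\psi}(y^{-1} t y . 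R) \; .$$
The key observation that licenses moving from one $r$-representative to another is that $\hh{\psi}$ is a class function on $\oo{N}_G(R)$, so conjugating the argument by the image of $r \in N(R)$ does not change its value; the conditions $Q \leq {}^y R$ and $y^{-1}ty \in N(R)$ are easily checked to be double-coset-invariant and to match, for $y = xgr$, the conditions on $x$ and $g$ already in place. Finally substitute $g := y^{-1}$, converting $Q \leq {}^y R$ into ${}^g Q \leq R$ and $y^{-1} t y$ into ${}^g t$, which yields the stated formula.

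The main obstacle is the bookkeeping in the last step, namely showing that the factor $|N_G(R)| / |N(Q) \cap {}^g N(R)|$ produced by the left-coset decomposition of the double coset exactly absorbs the denominator of the induced-character formula, leaving the clean $1/|N_G(R)|$ of the target. Once the transversal and class-function arguments are laid out carefully, everything else is formal.
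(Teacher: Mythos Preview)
Your proof is correct and follows essentially the same approach as the paper: invoke Lemma~\ref{3.3}, apply the induced-character formula, and reindex the resulting double-coset sum into a single sum over $G$. The paper's reindexing differs only cosmetically (it spreads the double-coset sum over all $g \in G$ first and then substitutes $f = g^{-1}$, rather than passing through right $N(Q)$-cosets as you do), and your choice of a $p'$-lift $t$ is harmless since the right-hand side is independent of the choice of lift.
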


\begin{proof}
Abusing notation, writing ${}^g \hh{\psi}$ to
denote the conjugate by $g$ of a restriction of
an inflation of $\hh{\psi}$, we have
$$\big( \bbb_{N(Q)}^\ppp \ind \bbb_{N(Q) \cap
  {}^g \! N(R)}^\ppp ({}^g \hh{\psi}) \big) (t)
  = \sum_{y \in N(Q) \: : \: {}^y t \in
  {}^g \! N(R)} {}^g \hh{\psi} ({}^y t)
  / |N(Q) \cap {}^g N(R)| \; .$$
So, by Lemma \ref{3.3},
$$n_G(Q, s; R, \psi) = \sum_{N(Q) g N(R)
  \subseteq G, \, y \in N(Q) \: : \: Q \leq
  {}^g \! R, \, {}^y t \in {}^g \! N(R)} {}^g
  \hh{\psi}({}^y t) / |N(Q) \cap {}^g \! N(R)|$$
$$= \sum_{g \in G, \, y \in N(Q) \: : \: Q \leq
  {}^g \! R, \, {}^y t \in {}^g \! N(R)} {}^g
  \hh{\psi}({}^y t) / |N(Q)| |N(R)| \; .$$
Making the substitution $f = g^{-1}$ and
noting that ${}^g \psi({}^y t) = \psi({}^{fy} t)$,
we have
$$n_G(Q, s; R, \psi) = \sum_{f \in G, \, y \in
  N(Q) \: : \: {}^f \! Q \leq R, \, {}^{fy} t \in N(R)}
  \hh{\psi} ({}^{fy} t . R) / |N(Q)| |N(R)|$$
\begin{equation*}
  \sum_{fN(Q) \subseteq G, \, y \in N(Q) \: : \:
  {}^{fy} Q \leq R, \, {}^{fy} t \in N(R)} \hh{\psi}
  ({}^{fy} t) / |N(R)| \; . \qedhere
\end{equation*}
\end{proof}

We shall be making use of the following more
complicated formula for $n_G(Q, s; P, \phi)$.

\begin{pro} \label{3.5}
Given $(Q, [s]) \in \cE(G)$ and $(R, \psi) \in
\cC(G)$, then
$$n_G(Q, s; R, \psi) = \sum_{z \in G, \, tQ \in [s]
  \: : \: Q \leq {}^z \! R, \, t \in N_G({}^z \! R)} {}^z
  \hh{\psi} (t . {}^z \! R) \, |C_{\oo{N}_G(Q)}(s)| /
  |\oo{N}_G(Q)| |N_G(R)| \; .$$
\end{pro}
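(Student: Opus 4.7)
The plan is to derive Proposition \ref{3.5} from Proposition \ref{3.4} by (i) the change of variable $z = g^{-1}$ in the sum over $G$, and (ii) averaging the resulting formula over representatives of the $\oo{N}_G(Q)$-conjugacy class $[s]$. Informally, Proposition \ref{3.5} is a symmetrized version of Proposition \ref{3.4}: instead of conjugating $Q$ into $R$, one conjugates $R$ to contain $Q$, and instead of working with a single lift $t$ of $s$, one sums over all lifts.

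First, I would apply the substitution $z = g^{-1}$ to the formula in Proposition \ref{3.4}. The condition ${}^g Q \leq R$ becomes $Q \leq {}^z R$; the condition ${}^g t \in N_G(R)$ becomes $t \in {}^z N_G(R) = N_G({}^z R)$; and $\hh{\psi}({}^g t \cdot R) = \hh{\psi}({}^{z^{-1}} t \cdot R) = {}^z \hh{\psi}(t \cdot {}^z R)$, interpreting ${}^z \hh{\psi}$ as the Brauer character of $\FF \oo{N}_G({}^z R)$ obtained by conjugating $\hh{\psi}$ by $z$. This turns Proposition \ref{3.4} into the preliminary formula
$$n_G(Q, s; R, \psi) = \frac{1}{|N_G(R)|} \sum_{z \in G \, : \, Q \leq {}^z R, \, t \in N_G({}^z R)} {}^z \hh{\psi}(t \cdot {}^z R),$$
valid for any chosen lift $t \in N_G(Q)$ of $s$.

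Next, I would show that the right-hand side depends only on the class $[s]$, not on the individual representative $tQ$. If $t'Q = u(tQ)u^{-1}$ in $\oo{N}_G(Q)$ for some $u \in N_G(Q)$, then $t' = utu^{-1} q$ with $q \in Q$. Substituting $z = uz'$ in the sum for $t'$: the condition $Q \leq {}^{uz'} R$ reduces to $Q \leq {}^{z'} R$ because $u \in N_G(Q)$; the condition $t' \in N_G({}^{uz'} R)$ becomes $t \cdot (u^{-1} q u) \in N_G({}^{z'} R)$, which is equivalent to $t \in N_G({}^{z'} R)$ since $u^{-1} q u \in Q \leq {}^{z'} R$; and the summand ${}^{uz'} \hh{\psi}(t' \cdot {}^{uz'} R)$ equals ${}^{z'} \hh{\psi}(t \cdot {}^{z'} R)$ because the factor ${}^{z'^{-1}}(u^{-1} q u)$ lies in $R$ and so acts as the identity modulo $R$. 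Thus the sum is $N_G(Q)$-equivariant in the choice of lift.

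Finally, summing the preliminary formula over the $|[s]|$ representatives $tQ \in [s]$ yields $|[s]|$ identical copies; dividing by $|[s]| = |\oo{N}_G(Q)| / |C_{\oo{N}_G(Q)}(s)|$ gives
$$n_G(Q, s; R, \psi) = \frac{|C_{\oo{N}_G(Q)}(s)|}{|\oo{N}_G(Q)| \cdot |N_G(R)|} \sum_{z \in G, \, tQ \in [s] \, : \, Q \leq {}^z R, \, t \in N_G({}^z R)} {}^z \hh{\psi}(t \cdot {}^z R),$$
which is Proposition \ref{3.5}. The main technical hurdle is the invariance in the third step: one must simultaneously track both the indexing conditions and the character values under the substitution $z \mapsto u z'$, and the essential observation making it work is that the hypothesis $Q \leq {}^z R$ permits any extra factor coming from $Q$ to be absorbed trivially inside the quotient $\oo{N}_G({}^z R)$.
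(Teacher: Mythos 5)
Your proposal is correct and follows essentially the same route as the paper: substitute $z = g^{-1}$ in Proposition \ref{3.4}, then average over the $|[s]|$ choices of representative $tQ \in [s]$, producing the factor $|C_{\oo{N}_G(Q)}(s)| / |\oo{N}_G(Q)| = 1/|[s]|$. The only difference is that your second step (verifying invariance under the choice of lift) is carried out by direct computation, whereas the paper leaves it implicit since $n_G(Q,s;R,\psi) = \epsilon_{Q,s}^G[N_{R,\psi}^G]$ manifestly depends only on the class $[s]$.
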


\begin{proof}
This follows from the previous proposition by
substituting $z = g^{-1}$. The
factor $1/|[s]| = |C_{\oo{N}(Q)}(s)|/|\oo{N}(Q)|$
appears because we are summing over all the
elements of $[s]$.
\end{proof}

\begin{thm} \label{3.6}
Given $(P, \phi) \in \cC(G)$ and $(Q, [s]) \in
\cE(G)$, then
$$n_G^{-1}(P, \phi; Q, s) = \sum_{vN_G(Q)
  \subseteq G, \, aP \in \oo{N}_G(P)_{p'}
  \: : \: P \leq {}^v Q, \, a \in N_G({}^v Q), \,
  a . {}^v Q \in {}^v [s]} \dozback
  \dozback \dozback \phi(a^{-1} P) \,
  \mu_a(P, {}^v Q) / | \oo{N}_G(P)| \; .$$
\end{thm}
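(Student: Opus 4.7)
The plan is to verify the identity
\[
\sum_{(Q,[s]) \in_G \cE(G)} \tt{n}(P, \phi; Q, s) \, n_G(Q, s; R, \psi) \;=\; \delta_{(P, \phi)_G, \, (R, \psi)_G},
\]
where $\tt{n}$ denotes the right-hand side of Theorem~\ref{3.6}. Uniqueness of matrix inverses, together with the upper-triangular structure supplied by Lemma~\ref{3.1}, then identifies $\tt{n}$ with $n_G^{-1}$.

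The first step is an orbit-stabiliser reduction. Since every element of $N_G(Q)$ acts on $\oo{N}_G(Q)$ by an inner automorphism, the stabiliser of $(Q, [s])$ in $G$ is $N_G(Q)$, and combining the outer orbit sum with the internal coset sum $\sum_{vN_G(Q)}$ of $\tt{n}$ rewrites as a single sum over $(Q'', [s'']) \in \cE(G)$ via $Q'' = {}^vQ$, $[s''] = {}^v[s]$, eliminating the conjugation by $v$ from the summand. The constraint $aQ'' \in [s'']$ then pins down $[s'']$ as $[aQ'']$, absorbing the outer $[s'']$-sum. Choosing $s'' = aQ''$ as representative and $t = a$ as lift in Proposition~\ref{3.4}, the double product takes the form
\[
\frac{1}{|\oo{N}_G(P)| \, |N_G(R)|} \sum_{Q'', \, aP, \, g \in G} \phi(a^{-1}P) \, \mu_a(P, Q'') \, \hh{\psi}({}^g a \cdot R),
\]
subject to the constraints $aP \in \oo{N}_G(P)_{p'}$, $P \leq Q''$, $a \in N_G(Q'')$, ${}^gQ'' \leq R$, and ${}^ga \in N_G(R)$.

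The heart of the proof is Möbius inversion on the fixed-point poset. With $a$ and $g$ fixed, $a \in N_G(P)$ is automatic from $aP \in \oo{N}_G(P)$, while ${}^ga \in N_G(R)$ is equivalent to $a \in N_G({}^{g^{-1}}R)$; hence both $P$ and ${}^{g^{-1}}R$ lie in $\cS_p(G)^{\la a\ra}$, and $Q''$ ranges over the interval $[P, \, {}^{g^{-1}}R]$ in that subposet. The defining recurrence of the Möbius function gives
\[
\sum_{Q'' \in [P, \, {}^{g^{-1}}R]} \mu_a(P, Q'') \;=\; \delta_{P, \, {}^{g^{-1}}R},
\]
so only the terms with ${}^{g^{-1}}R = P$, equivalently $R = {}^gP$, survive. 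In these terms $Q'' = P$, and conjugation by $g$ converts $\hh{\psi}({}^ga \cdot R)$ into $\hh{{}^{g^{-1}}\psi}(aP)$, where ${}^{g^{-1}}\psi$ denotes the irreducible Brauer character of $\oo{N}_G(P)$ pulled back from $\psi$ via the isomorphism $\oo{N}_G(P) \cong \oo{N}_G(R)$.

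What remains is $\frac{1}{|\oo{N}_G(P)| \, |N_G(R)|} \sum_{g \, : \, {}^gP = R} \sum_{aP \in \oo{N}_G(P)_{p'}} \phi(a^{-1}P) \, \hh{{}^{g^{-1}}\psi}(aP)$. The inner sum is the Brauer-character orthogonality relation on $\oo{N}_G(P)$ (already exploited in the proof of Proposition~\ref{3.2}), evaluating to $|\oo{N}_G(P)| \, \delta_{\phi, \, {}^{g^{-1}}\psi}$. The outer count of $g$'s satisfying ${}^g(P, \phi) = (R, \psi)$ is $|N_G(P)|$ if $(P, \phi) \sim_G (R, \psi)$ and zero otherwise; combined with $|N_G(P)| = |N_G(R)|$ in the former case, the total equals $\delta_{(P, \phi)_G, (R, \psi)_G}$, as desired. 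The equivalence with Theorem~\ref{2.4} is a routine bookkeeping: substituting into $e_{Q,s}^G = \sum_{(P, \phi)_G} n_G^{-1}(P, \phi; Q, s)[N_{P, \phi}^G]$ and performing the substitution $v \mapsto v^{-1}$ converts the coset sum over $vN_G(Q)$ into a sum over $(P, \phi) \in \cC(G)$ with $P \leq Q$ and $g \in N_G(Q)$, with the factor $|P|/|N_G(Q)|$ arising from $|P| \cdot |\oo{N}_G(P)| = |N_G(P)|$ together with the orbit size $|G|/|N_G(P)|$. The main technical obstacle is the careful bookkeeping of stabiliser and normalising factors so that they cancel cleanly, and the identification of the correct interval in the correct fixed-point subposet $\cS_p(G)^{\la a\ra}$ for the Möbius collapse.
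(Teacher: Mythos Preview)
Your proof is correct and follows essentially the same route as the paper: verify that the proposed formula is a right inverse to $\cN$ by an orbit--stabiliser reindexing, collapse the inner sum via the M\"{o}bius recurrence on the closed interval $[P,\,{}^{g^{-1}}R]$ in $\cS_p(G)^{\la a\ra}$, and finish with Brauer orthogonality. The only cosmetic differences are that you invoke Proposition~\ref{3.4} directly (the paper goes through the equivalent Proposition~\ref{3.5}), and you apply orthogonality for each fixed $g$ and then count the $g$'s, whereas the paper first uses the class-function property to make the inner sum independent of the conjugating element and then applies orthogonality once; the key observation that the stabiliser of $(P,\phi)$ is all of $N_G(P)$, because $N_G(P)$ acts on $\oo{N}_G(P)$ by inner automorphisms, is what makes your final count come out right.
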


\begin{proof}
Note that the condition $P \leq {}^v Q$ implies
that the conditions $a \in N({}^v Q)$ and
$a . {}^v Q \in {}^v [s]$ depend only on $aP$,
not on the choice of coset representative $a$. Let
$\nu_G^{-1}(P, \phi; Q, s)$ denote the right-hand
side of the asserted equality. Fixing $(P, \phi)$ and
another element $(R, \psi) \in \cE(G)$, let
$$\Delta = \sum_{(Q, [s]) \in_G \cE(G)}
  \nu_G^{-1}(P, \phi; Q, s) \, n_G(Q, s; R, \psi) \; .$$
If we can show that
$$\Delta = \left\{ \begin{array}{ll}
  1 & \mbox{\rm{if $(P, \phi) =_G (R, \psi)$,}} \\
  0 & \mbox{\rm{otherwise,}} \end{array} \right.$$
then the equality $n_G^{-1}(P, \phi; Q, s) =\
\nu_G^{-1}(P, \phi; Q, s)$ will follow. Summing over
all the elements $(Q, [s])$ of $\cE(G)$, we have
$$\Delta = \sum_{(Q, [s]) \in \cE(G)} \frac{|N(Q)|}{|G|}
  \, \nu_G^{-1}(P, \phi; Q, s) \, n_G(Q, s; R, \psi) \; .$$
Let $\Gamma = |\oo{N}(P)| |N(R)| \Delta$.
Applying Proposition \ref{3.5}, summing over all the
elements $v$ of $G$, we have
$$|G| \Gamma = \sum_{(Q, [s]), v, aP, z, tQ}
  \phi(a^{-1} P) \, \mu_a(P, {}^v Q) . {}^z \hh{\psi}
  (t . {}^z R) \, |C_{\oo{N}(Q)}(s)| / |\oo{N}(Q)|$$
with indices $(Q, [s]) \in \cE(G)$, $v \in G$,
$aP \in \oo{N}(P)_{p'}$, $z \in G$, $tQ \in [s]$
subject to the conditions
$$P \leq {}^v Q \; , \;\;\;\;\;\; a \in N({}^v Q) \; ,
  \;\;\;\;\;\; a. {}^v Q \in {}^v [s] \; , \;\;\;\;\;\;
  Q \leq {}^z \! R \; , \;\;\;\;\;\; t \in N({}^z \! R) \; .$$
Replacing $tQ$ with $u . {}^v Q \in {}^v [s]$ where
$u = {}^v t$, also replacing $z$ with $y \in G$
where $y = vz$, we have ${}^z \hh{\psi} (t . {}^z \!
R) = {}^y \hh{\psi}(u . {}^y \! R)$ and
$$|G| \Gamma = \sum_{(Q, [s]), v, aP, y, u. {}^v Q}
  \phi(a^{-1} P) \, \mu_a(P, {}^v Q) . {}^y
  \hh{\psi} (u. {}^y R) \, |C_{\oo{N}(Q)}(s)|
  / |\oo{N}(Q)|$$
where the indices are subject to
$$P \leq {}^v Q \leq {}^y \! R \; , \;\;\;\;\;\;
  a \in N({}^v Q) \; , \;\;\;\;\;\; a . {}^v Q \in
  {}^v [s] \; , \;\;\;\;\;\; u \in N({}^y \! R) \; .$$
As $(Q, [s])$ and $v$ vary, the element $({}^v Q,
{}^v [s])$ takes each value $|G|$ times. So
$$\Gamma = \sum_{(Q, [s]), aP, y, uQ} \phi(a^{-1} P)
  \, \mu_a(P, Q) \, . {}^y \hh{\psi} (u. {}^y R) \,
  |C_{\oo{N}(Q)}(s)| / |\oo{N}(Q)|$$
with indices $(Q, [s]) \in \cE(G)$, $aP \in
\oo{N}(P)_{p'}$, $y \in G$, $uQ \in [s]$ subject to
$$P \leq Q \leq {}^y \! R \; , \;\;\;\;\;\;
  a \in N(Q) \; , \;\;\;\;\;\; aQ \in [s] \; , \;\;\;\;\;\;
  u \in N({}^y \! R) \; .$$
We now replace $uQ$ with $cQ \in \oo{N}(Q)$
where $uQ = {}^{cQ} aQ$. For each value of
$uQ$, there are $|C_{\oo{N}(Q)}(s)|$ associated
values of $cQ$. We also replace $y$ with
$x \in G$ where $x = c^{-1} y$, obtaining
$$\Gamma = \sum_{(Q, [s]), aP, x, cQ}
  \phi(a^{-1} P) \, \mu_a(P, Q) .
  {}^x \hh{\psi} (a. {}^x R) / |\oo{N}(Q)|$$
with indices subject to
$$P \leq Q \leq {}^x R \; , \;\;\;\;\;\; a \in N(Q)
  \cap N({}^x R) \; , \;\;\;\;\;\; aQ \in [s] \; .$$
The term of the sum does not depend on
$cQ$, so
$$\Gamma = \sum_{(Q, [s]), aP, x} \phi(a^{-1} P)
  \, \mu_a(P, Q) . {}^x \hh{\psi} (a. {}^x R) \; .$$
The conditions $aP \in \oo{N}(P)_{p'}$ and
$a \in N(Q) \cap N({}^x R)$ imply that $aQ$
is a $p'$-element of $\oo{N}(Q)$. So we can
remove $[s]$ from the indexing, obtaining
$$\Gamma = \sum_{x, aP} \phi(a^{-1} P) .
  {}^x \hh{\psi} (a. {}^x R) \sum_Q \mu_a(P, Q)$$
with the indices $x \in G$ and $aP \in
\oo{N}(P)_{p'}$ satisfying $P \leq {}^x R$ and
$a \in N({}^x R)$, the index $Q$ running over
the subgroups $P \leq Q \leq {}^x R$ such that
$a \in N(Q)$. In other words, $Q$ runs over the
elements of the closed interval $[P, {}^x R]$
in the poset $\cS_p(G)^{\la a \ra}$. We now
apply the recurrence relation for M\"{o}bius
functions. If $P \neq_G R$ then, for each $x$
and $aP$, we have $\sum_Q \mu_a(P, Q) = 0$,
hence $\Gamma = 0$ and $\Delta = 0$.

It remains only to deal with the case where
$P =_G R$. We may assume that $P = R$
and it suffices to show that if $\phi = \psi$
then $\Delta = 1$, otherwise $\Delta = 0$.
We have $\sum_Q \mu_a(P, Q) = \mu_a(P,P)
= 1$. Using the
latest formula for $\Gamma$ and the
orthogonality relation for Brauer characters,
$$\Delta = \sum_{x \in N(P), aP \in
  \oo{N}(P)_{p'}} \phi(a^{-1} P) . {}^x \hh{\psi}
  (a . {}^x P) / |\oo{N}(P)| |N(P)|$$
\begin{equation*}
= \sum_{aP \in \oo{N}(P)_{p'}} \phi(a^{-1} P) .
  \hh{\psi}(aP) / |\oo{N}(P)| = \left\{ \begin{array}{ll}
  1 & \mbox{\rm{if $\phi = \psi$,}} \\
  0 & \mbox{\rm{otherwise.}}
  \end{array} \right. \qedhere
\end{equation*}
\end{proof}

The formula in the theorem still holds if we
divide the term by $|N(Q)|$ and sum over all
the elements $v \in G$ instead of the coset
representatives $v N(Q) \subseteq G$. Now
making the substitutions $w = v^{-1}$ and
$g = {}^w a$, the formula becomes
$$n_G^{-1}(P, \phi; Q, s) = \sum_{w \in G, \,
  g . {}^w P \in \oo{N}_G({}^w P)_{p'}
  \: : \: {}^w P \leq Q, \, b \in N_G(Q), \,
  gQ \in [s]} \dozback \dozback \dozback
  {}^w \phi(g^{-1} . {}^w P) \, \mu_g({}^w P, Q)
  / | \oo{N}_G(P)| |N_G(Q)| \; .$$
Meanwhile, summing over all the elements
$(P, \phi)$ of $\cC(G)$, we have
$$e_{Q, s}^G = \frac{1}{|G|} \sum_{(P, \phi)
  \in_G \cC(G)} |N(P)| \, n_G^{-1}(P, \phi; Q, s)
  \, [N_{P, \phi}^G] \; .$$
Combining the latest two equalities, and
noting that each element of $\cC(G)$ appears
$|G|$ times as $({}^w P, {}^w \phi)$, we
obtain the formula for $e_{Q, s}^G$ in
Theorem \ref{2.4}.

\section{The linearization map}

Letting $V \leq G$ and letting $\nu$ be the
Brauer character of a $1$-dimensional
$\FF V$-module $\FF_\nu$, then
$\bbb_G^\ppp \Ind \bbb_V^\ppp (\FF_\nu)$
is a trivial source $\FF G$-module. In this
section, we shall explicitly express the
isomorphism class $[\bbb_G^\ppp \Ind
\bbb_V^\ppp (\FF_\nu)]$ as a linear
combination of the elements of the canonical
basis of $T_\FF(G)$.

Let us make some comments on interpretation.
An {\bf $\FF^\times$-fibred $G$-set}, recall,
is defined to be an $\FF^\times$-free
$\FF^\times \times G$-set with finitely many
$\FF^\times$-orbits. The $\FF^\times$-orbits
are called the {\bf fibres}. The
{\bf $\FF$-monomial Burnside ring}, denoted
$\BB_\FF(G)$, is defined to be the $\ZZ$-module
generated by the isomorphism classes of
$\FF^\times$-fibred $G$-sets, subject to the
relation $[X] + [Y] = [X \sqcup Y]$ where $X$
and $Y$ are $\FF^\times$-fibred $G$-sets and
$[X]$ denotes the isomorphism class of $X$. We
make $B_\FF(G)$ become a unital ring with
multiplication given by $[X][Y] = [X \otimes Y]$
where $X \otimes Y$ is the set of
$F^\times$-orbits of $X \times Y$ under the
action such that $\lambda \in \FF^\times$
sends $(x, y) \in X \times Y$ to $(\lambda x,
\lambda^{-1} y)$. Let $\FF X$ denote the
$\FF G$-module, well-defined up to isomorphism,
such that there is an embedding of $\FF^\times
\times G$-sets $X \hookrightarrow \FF X$
whereby any set of representatives of the fibres
of $X$ becomes a basis for $\FF X$. Let
$$\lin_G \: : \: B_\FF(G) \rightarrow T_\FF(G)$$
be the unital ring homomorphism such that
$\lin_G[X] = [\FF X]$. We mention that $\lin_G$
is surjective and that $\lin : B_\FF \rightarrow
T_\FF$ is a morphism of biset functors. For
those two results and further discussions of
$B_\FF(G)$ and the biset functor $B_\FF$,
see \cite{Bar04} and Boltje \cite{Bol98},
\cite{Bol}. The isomorphism classes of
transitive $F^\times$-fibred $G$-sets
comprise a $\ZZ$-basis for $B_\FF(G)$.
These basis elements have the form
$[\bbb_G^\ppp \Ind \bbb_V^\ppp
(\FF_\nu^\times)]$ where $V$ and $\nu$
are as above and $\FF_\nu^\times =
\FF_\nu - \{ 0 \}$. We have $\lin_V
[\FF_\nu^\times] = [\FF_\nu]$ and
$$\lin_G [ \bbb_G^\ppp \Ind \bbb_V^\ppp
  (\FF_\nu^\times) ] = [ \bbb_G^\ppp \Ind
  \bbb_V^\ppp (\FF_\nu)] \; .$$
The formula in the next theorem can be
viewed as a formula for the matrix of
$\lin_G$ with respect to the above basis
for $B_\FF(G)$ and the canonical basis
for $T_\FF(G)$.

For a $p$-subgroup $P$ of $G$ and an
arbitrary subgroup $V$ of $G$, we write
$(P, V]_{\cS_p}$ to denote the poset of
$p$-subgroups $Q$ such that $P < Q \leq
V$. Given $g \in N_G(P) \cap V$, we write
$(P, V]_{\cS_p}^{\la g \ra}$ to denote the
subposet of $(P, V]_{\cS_p}$ consisting of
those $Q$ such that $g \in N_G(Q)$.

\begin{thm} \label{4.1}
Given $V$ and $\nu$ as above, then
$$[\bbb_G^\ppp \Ind \bbb_V^\ppp
  (\FF_\nu)] = \frac{-1 \;\;}{|V|} \sum_{(P,
  \phi) \in \cC(G), \, gP \in \oo{N}_G(P)_{p'}
  \: : \: \la P, g \ra \leq V} \!\!\!\! \!\!\!\!
  \!\!\!\! |P| \, \phi(g^{-1}
  P) \, \nu(g) \, \tt{\chi}((P, V]_{\cS_p}^{\la g
  \ra}) \, [N_{P, \phi}^G] \; .$$
\end{thm}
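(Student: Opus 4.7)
The plan is to verify the identity by expanding both sides in the basis of primitive idempotents $\{ e_{Q,s}^G \}$ of Proposition~\ref{2.2}, or equivalently to compare the values of $\epsilon_{Q,s}^G$ for each $G$-orbit of $(Q,[s]) \in \cE(G)$. Writing $a(Q,s) := \epsilon_{Q,s}^G [\bbb_G^\ppp \Ind \bbb_V^\ppp (\FF_\nu)]$, so that $[\Ind_V^G(\FF_\nu)] = \sum_{(Q,[s]) \in_G \cE(G)} a(Q,s) \, e_{Q,s}^G$, the task reduces to computing $a(Q,s)$ explicitly and then substituting the formula of Theorem~\ref{2.4} for each $e_{Q,s}^G$.

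First I would compute $a(Q,s)$. The key observation is that the Brauer character $\nu$ takes values in the group of $p'$-th roots of unity, hence is trivial on every $p$-subgroup of $V$. Consequently, for any $g \in G$ with $Q \leq {}^g V$, the $1$-dimensional line $g \otimes \FF_\nu \subseteq \Ind_V^G(\FF_\nu)$ is pointwise fixed by $Q$. A Brauer-quotient computation analogous to Lemma~\ref{3.3} (but simpler, since no inflation is involved) identifies $\Ind_V^G(\FF_\nu)[Q]$ with the $\FF$-span of $\{ gV : Q \leq {}^g V \}$, carrying a monomial $\oo{N}_G(Q)$-action weighted by $\nu$. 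Evaluating the trace at a $p'$-lift $t \in N_G(Q)$ of $s$ yields
$$a(Q,s) \; = \; \frac{1}{|V|} \sum_{g \in G \: : \: Q \leq {}^g V, \, t \in {}^g V} \nu(g^{-1} t g).$$

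Substituting this together with Theorem~\ref{2.4} and interchanging the order of summation, the coefficient of $[N_{P,\phi}^G]$ in the canonical-basis expansion becomes, for each fixed $(P, \phi)$, $g P \in \oo{N}_G(P)_{p'}$ and $w \in G$ (the latter parametrising the choice of $g$ in the formula for $a(Q, gQ)$), an inner sum of $\mu_g(P, Q)$ over $p$-subgroups $Q$ in the closed interval $[P, {}^{w^{-1}} V]_{\cS_p(G)}^{\la g \ra}$. The main step is the M\"{o}bius identity
$$\sum_{Q \in [P, \, {}^{w^{-1}} V]_{\cS_p(G)}^{\la g \ra}} \mu_g(P, Q) \; = \; -\, \tt{\chi} \bigl( (P, {}^{w^{-1}} V]_{\cS_p(G)}^{\la g \ra} \bigr),$$
obtained by adjoining a maximum element to the interval and applying the recurrence relation for M\"{o}bius functions stated just before Theorem~\ref{2.4}. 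A change of variables $P \mapsto {}^w P$, $g \mapsto {}^w g$ then converts the condition ``$Q \leq {}^{w^{-1}} V$ and $g \in {}^{w^{-1}} V$'' into ``$\la P, g \ra \leq V$'', and the sum over $w$ folds into the orbit multiplicities. After cancellation, each surviving term contributes exactly $-\frac{|P|}{|V|} \phi(g^{-1} P) \nu(g) \tt{\chi}((P, V]_{\cS_p}^{\la g \ra})$, matching Theorem~\ref{4.1}.

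I expect the main obstacle to be the combinatorial bookkeeping: reconciling the sums over $G$-orbit representatives of $\cE(G)$ and $\cC(G)$ with the ``flat'' sums over all elements of $\cC(G)$ and $\oo{N}_G(P)_{p'}$ appearing in Theorem~\ref{4.1}, and verifying that the normalization factors $|V|$, $|N_G(Q)|$, $|\oo{N}_G(P)|$, together with the relevant orbit sizes, cancel consistently. The conceptual content is concentrated in the computation of $a(Q,s)$ and the single M\"{o}bius identity; the remaining work is algebraic housekeeping.
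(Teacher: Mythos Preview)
Your proposal is correct and follows essentially the same route as the paper's proof: both compute $\epsilon_{Q,s}^G[\Ind_V^G(\FF_\nu)]$ via the Brauer quotient, substitute Theorem~\ref{2.4} for each $e_{Q,s}^G$, collapse the inner sum over $Q$ to $-\tt{\chi}((P, {}^f V]_{\cS_p}^{\la g \ra})$, and then reindex. The paper justifies the M\"{o}bius identity by direct chain-counting rather than by adjoining a formal maximum, but this is only a cosmetic difference.
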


\begin{proof}
Let $I = [\bbb_G^\ppp \Ind \bbb_V^\ppp
(F_\nu)]$. Summing over representatives of
$G$-orbits, write
$$I = \sum_{(P, \phi) \in_G \cC(G)}
  \lambda_{P, \phi} [N_{P, \phi}^G] \; .$$
We are to evaluate the coefficients
$\lambda_{P, \phi} \in \KK$. Consider an
element $(Q, [s]) \in \cE(G)$ and let
$\hh{s} \in N_G(Q)$ such that $s =
\hh{s} Q$. We have
$$(\bbb_G^\ppp \Ind \bbb_V^\ppp
  (\FF_\nu))[Q] = \bigoplus_{fV \subseteq G
  \: : \: Q \leq {}^f V} f \otimes \FF_\nu \; .$$
Supposing that $Q \leq {}^f V$, then $s$
stabilizes $f \otimes \FF_\nu$ if and only if
$\la Q, \hh{s} \ra \leq {}^f V$, in which case,
$s$ acts on $f \otimes \FF_\nu$ as
multiplication by ${}^f \! \nu(\hh{s})$.
Therefore
$$\epsilon_{Q, s}^G(I) = \sum_{fV \subseteq
  G \: : \: \la Q, \hh{s} \ra \leq {}^f V}
  {}^f \! \nu(\hh{s}) \; .$$
It follows that
$$I = \sum_{(Q, [s]) \in_G \cE(G)}
  \epsilon_{Q, s}^G(I) \, e_{Q, s}^G =
  \frac{1}{|G|} \sum_{(Q, [s]) \in \cE(G),
  fV \subseteq G \: : \: \la Q, \hh{s} \ra
  \leq {}^f V} \!\!\!\! \!\!\!\! \!\!\!\! |N_G(Q)|
  . {}^f \! \nu(\hh{s}) \, e_{Q, s}^G \; .$$
Note that there is an arbitrary choice of
representative element $s$ of $[s]$ and
there is an arbitrary choice of lift $\hh{s}$
of $s$. Given $(Q, [s])$, then the range of
the index $fV$ depends on the choice of
$s$ but, given $(Q, [s])$ and $s$, then
the range of $fV$ does not depend on
the choice of $\hh{s}$. Applying
Theorem \ref{2.4},
$$I = \frac{1}{|G|} \sum_{(Q, [s]), fV, (P, V),
  gP} |P| . {}^f \! \nu(\hh{s}) \, \phi(g^{-1} P)
  \, \mu_g(P, Q) \, [N_{P, \phi}^G]$$
summed over $(Q, [s]) \in \cE(G)$, $fV
\subseteq G$, $(P, \phi) \in \cC(G)$,
$gP \in \oo{N}_G(P)_{p'}$ subject to
$$\la Q, \hh{s} \ra \leq {}^f V \; , \;\;\;\;\;\;\;\;
  P \leq Q \; , \;\;\;\;\;\;\;\; g \in N_G(Q) \; ,
  \;\;\;\;\;\;\;\; gQ \in [s] \; .$$
As we noted above, given $(Q, [s])$, the
choices of $s$ and $\hh{s}$ are arbitrary.
So, changing the order of summation by
giving the index $gP$ priority over the index
$fV$, we may replace $\hh{s}$ with $g$.
Bearing in mind that $(P, \phi)$ has $|G :
N_G(P)|$ conjugates, we obtain
$$\lambda_{P, \phi} = \frac{1}{|N_G(P)|}
  \sum_{gP, fV} |P| . {}^f \! \nu(g) \,
  \phi(g^{-1} P) \, \sum_Q \mu_g(P, Q)$$
summed over $gP \in \oo{N}_G(P)_{p'}$,
$fV \subseteq G$, $Q \in \cS_p(G)$
subject to the two conditions
$$P \leq Q \leq {}^f V \; , \dozspace
  g \in N_G(Q) \cap {}^f V \; .$$
Those two conditions can be rewritten as
$$\la P, g \ra \leq {}^f V \; , \dozspace
  Q \in [P, {}^f V]_{\cS_p}^{\la g \ra}$$
where $[P, {}^f V]_{\cS_p}^{\la g \ra}$
denotes the set of $p$-subgroups $Q$
of $G$ such that $P \leq Q \leq {}^f V$
and $g \in N_G(P)$. By the definition of
the M\"{o}bius function,
$$\mu_g(P, Q) = \sum_{R_{-1}, ...,
  R_{n+1}} (-1)^n$$
summed over $R_{-1}, ..., R_{n+1} \in
[P, {}^f V]_{\cS_p}^{\la g \ra}$ such that
$P = R_{-1} < ... < R_{n+1} = Q$ (allowing
the possibilities $n = -2$ and $n = -1$).
Therefore,
$$\sum_Q \mu_g(P, Q) = - \tt{\chi}
  ((P, {}^f V]_{\cS_p}^{\la g \ra}) \; .$$
We have shown that
$$\lambda_{P, \phi} = \frac{-1 \;\;}{|N_G(P)|}
  \sum_{gP \in \oo{N}_G(P)_{p'}, fV \subseteq
  G \: : \: \la P, g \ra \leq {}^f V}
  |P| \, \phi(G^{-1} P) . {}^f \! \nu(g) \,
  \tt{\chi}((P, {}^f V]_{\cS_p}^{\la g \ra}) \; .$$
Again bearing in mind that $(P, \phi)$ has
$|G : N_G(P)|$ conjugates,
$$I = \frac{-1 \;\;}{|G|} \! \sum_{(P, \phi),
  gP, fV} |P| \, \phi(G^{-1} P) . {}^f \! \nu(g)
  \, \tt{\chi}((P, {}^f V]_{\cS_p}^{\la g \ra})
  \, [N_{P, \phi}^G]$$
summed over $(P, \phi) \in \cC(G)$ and
indices $gP$ and $fV$ as before. Dividing
by $|V|$ and replacing the index $fV$ with
$f \in G$, then making the substitution
$h = f^{-1}$, we have
$$I = \frac{1 \;\;}{|G||V|} \sum_{(P, \phi),
  gP, h} |P| . {}^h \! \phi({}^h(g^{-1} P)) \,
  \nu({}^h \! g) \, \tt{\chi}(({}^h \! P,
  V]_{\cS_p}^{\la {}^h \! g \ra}) \,
  [N_{{}^h \! P, {}^h \phi}^G ]$$
summed over $(P, \phi) \in \cC(G)$,
$gP \in \oo{N}_G(P)_{p'}$, $h \in G$ such
that $\la {}^h \! P, {}^h \, g \ra \leq G$.
Rearranging the sum so that $h$ becomes
the first index, then replacing $P$, $\phi$,
$g$ with $P^h$, $\phi^h$, $g^h$, we
obtain the required equality.
\end{proof}

\begin{cor} \label{4.2}
Given $V$ and $\nu$ as above, writing
$[\bbb_G^\ppp \Ind \bbb_V^\ppp (F_\nu)]$
as a linear combination of the elements of
the canonical basis for $T_\FF(G)$ and
supposing that a given basis element
$[N_{P, \phi}^G]$ has non-zero coefficient,
then $O_p(V) \leq {}^x P \leq V$.
\end{cor}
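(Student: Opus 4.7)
The plan is to invoke Theorem~\ref{4.1} and extract information from the assumption that a particular $[N_{P,\phi}^G]$ has non-zero coefficient. Since $[N_{P',\phi'}^G]$ depends only on the $G$-orbit of $(P',\phi')$, the coefficient of a given canonical basis element in the expansion of Theorem~\ref{4.1} is the sum of all contributions coming from $G$-conjugates of $(P,\phi)$. If that sum is non-zero, at least one summand must be non-zero, so there exist $x \in G$ and $gP' \in \oo{N}_G(P')_{p'}$ (where $P' = {}^x P$) such that $\la P', g \ra \leq V$ and $\tt{\chi}((P', V]_{\cS_p}^{\la g \ra}) \neq 0$. The first condition immediately yields ${}^x P = P' \leq V$, giving the upper bound of the corollary.

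For the lower bound $O_p(V) \leq {}^x P$, I will argue by contradiction using the conical contraction technique of Lemma~\ref{2.5}. Suppose $O_p(V) \not\leq P'$, and set $Q_0 = P' \cdot O_p(V)$. Because $\la P', g \ra \leq V$, we have $g \in V$, so $g$ normalizes $O_p(V)$ (as $O_p(V) \triangleleft V$); together with $g \in N_G(P')$ this gives $g \in N_G(Q_0)$. The assumption $O_p(V) \not\leq P'$ forces $Q_0 > P'$, while $Q_0 \leq V$ is immediate, so $Q_0 \in (P', V]_{\cS_p}^{\la g \ra}$. The map $\alpha \: : \: R \mapsto R \cdot O_p(V)$ then carries $(P', V]_{\cS_p}^{\la g \ra}$ into itself, satisfies $R \leq \alpha(R)$, and satisfies $\alpha(R) \geq Q_0$ for every $R$ in the poset. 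This is precisely the conical contraction of Benson~\cite[6.4.5, 6.4.6]{Ben91} used in Lemma~\ref{2.5}, forcing $\tt{\chi}((P', V]_{\cS_p}^{\la g \ra}) = 0$ and contradicting the choice of $P'$ and $g$.

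I do not expect a serious obstacle: the argument is a direct analogue of Lemma~\ref{2.5}, with $O_p(V)$ and $V$ now playing the roles played there by $\Phi(Q)$ and $Q$. The only point requiring care is verifying that the image of $\alpha$ and the element $Q_0$ remain in the correct $\la g \ra$-fixed subposet, and this reduces to the single observation that any element of $V$ normalizes the characteristic subgroup $O_p(V)$ of $V$.
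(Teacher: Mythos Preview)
Your proposal is correct and follows essentially the same route as the paper's own proof: invoke Theorem~\ref{4.1} to obtain $x$ and $g$ with $\la {}^x P, g\ra \leq V$ and $\tt{\chi}(({}^x P, V]_{\cS_p}^{\la g\ra}) \neq 0$, then derive a contradiction from $O_p(V)\not\leq {}^x P$ via the conical contraction $R \mapsto R\,O_p(V) \mapsto {}^x P\,O_p(V)$. Your write-up is in fact a bit more careful than the paper's in checking that the contraction stays inside the $\la g\ra$-fixed subposet (using $g \in V$ to get $g \in N_G(O_p(V))$), but the argument is the same.
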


\begin{proof}
By the latest theorem, there exist
$x \in G$ and $g \in N_G({}^x P)$ such
that $\la {}^x P, g \ra \leq V$ and
$\tt{\chi}(({}^x P, V]_{\cS_p}^{\la g \ra})
\neq 0$. For a contradiction, suppose
that $O_p(V) \not\leq {}^x P$. We may
assume that $x = 1$. Then the poset
$(P, V]_{\cS_p}^{\la g \ra}$ admits a
conical contraction $Q \mapsto Q O_p(V)
\mapsto P O_p(V)$, contradicting the
condition that the reduced Euler
characteristic is non-zero.
\end{proof}


\begin{thebibliography}{EMG}

\bibitem[Bar04]{Bar04}
L.\ Barker, {\it Fibred permutation sets and the
idempotents and units of monomial Burnside
rings}, J.\ Algebra {\bf 281}, 535-566 (2004).

\bibitem[Ben91]{Ben91}
D.\ Benson, ``Representations and Cohomology,
Vol.\ 2'', (Cambridge University Press, Cambridge,
1991).

\bibitem[Bol98]{Bol98}
R.\ Boltje, {\it Linear source modules and trivial
source modules}, Proc.\ Sympos.\ Pure Math.\
{\bf 63}, 7-30 (1998).

\bibitem[Bol]{Bol}
R.\ Boltje, {\it Representation rings of finite groups,
their species and idempotent formulae}, J.\ Algebra
(to appear).

\bibitem[BT10]{BT10}
S.\ Bouc, J.\ Th\'{e}venaz, {\it The primitive
idempotents of the $p$-permutation ring}, J.\ Algebra
{\bf 323}, 2905-2915 (2010).

\bibitem[Fei82]{Fei82}
W.\ Feit, ``The Representation Theory of Finite
Groups'', (North-Holland, Amsterdam, 1982).

\bibitem[Glu81]{Glu81}
D.\ Gluck, {\it Idempotent formula for the Burnside
algebra with applications to the $p$-sugroup
simplicial complex}, Illinois J.\ Math.\ {\bf 25},
63-67 (1981).

\bibitem[Yos83]{Yos83}
T.\ Yoshida {\it Idempotents of Burnside rings
and Dress induction theorem}, J.\ Algebra {\bf 80},
90-105 (1983).

\end{thebibliography}
\end{document}